\documentclass[12pt]{amsart}
\usepackage{amssymb,amsmath}
\usepackage{geometry}    
\usepackage{mathrsfs}

\usepackage{graphicx}

\usepackage{vmargin}

\usepackage{tikz}  

\setmargrb{1in}{1in}{1in}{1in}

\newtheorem{theorem}{Theorem}[section]
\newtheorem{lemma}[theorem]{Lemma}
\newtheorem{proposition}{Proposition}

\theoremstyle{definition}

\newtheorem{problem}{Problem}

\usepackage{yfonts}

\usepackage{xcolor}

\begin{document}
	
	\title[On an inhomogeneous uniform Littlewood type problem]{On an inhomogeneous uniform Littlewood type problem}
	
	\author{Johannes Schleischitz} 
	
	\thanks{ School of Computer, data and mathematical sciences (CDMS),
    Western Sydney University, Australia  \\
		J.schleischitz@westernsydney.edu.au}

\begin{abstract}
      We show that a fully inhomogeneous uniform Littlewood type problem has a negative answer
      and the counterexamples form a dense $G_{\delta}$ set.
      This extends the author's recent analogous result
      for the homogeneous case.
      The main difficulty in the general setting is the semi-homogeneous case where one linear form is
      homogeneous and the other inhomogeneous with irrational shift. We further address the higher dimensional analogue.
      % Moreover we significantly improve the bound for the homogeneous case by a small twist of the method
      % by the author from previous work.
\end{abstract}

\maketitle

{\footnotesize{

		{\em Keywords}: inhomogeneous approximaton, uniform approximation, Littlewood conjecture \\
		Math Subject Classification 2020: 11H46, 11J20}}

\section{ An inhomogeneous uniform Littlewood conjecture }  \label{intro}

    In~\cite{bfk} the ``Uniform Littlewood Conjecture'' (ULC) was
    introduced. In the special case of simultaneous approximation
    to two variables, it claims that for any real numbers $\xi, \zeta$
    we have
    \[
    \lim_{Q\to\infty} Q \min_{q\in \mathbb{Z}, 0<q\le Q} \Vert q\xi\Vert\cdot \Vert q \zeta\Vert = 0.
    \]
    Here $\Vert.\Vert$ denotes the distance to the nearest integer.
    This was proved to be false by the author~\cite{s}, in fact the 
    counterexamples were shown to contain a dense $G_{\delta}$ set.
    In this paper we study a fully inhomogeneous version 
    of ULC.
    We ask if we can extend the aforementioned 
    topological result from~\cite{s}.
     
    \begin{problem}  \label{p1}
        Given any $\theta_1, \theta_2\in\mathbb{R}$, does the set 
        \[
        \Theta(\theta_1,\theta_2):= \left\{ (\xi, \zeta)\in\mathbb{R}^2: \; \limsup_{Q\to\infty} Q \min_{q\in \mathbb{Z}, 0<q\le Q} \Vert q\xi-\theta_1\Vert\cdot \Vert q \zeta-\theta_2\Vert > 0 \right\}
        \]
        always contain a dense $G_{\delta}$ set?
    \end{problem}

     We recall
  that the dense $G_{\delta}$ property implies many interesting
  other properties such as a full sumset and full packing dimension,
  as detailed in~\cite{s}, hence Problem~\ref{p1} appears of interest in several aspects.

   We should point out that apart from the fully homogeneous 
   case studied in~\cite{bfk, s}, examples of $(\xi, \zeta)\in\Theta(\theta_1,\theta_2)$
   can easily be found. Indeed if both $\theta_i\notin \mathbb{Z}$ then
   we may just take
  $\xi, \zeta$ arbitrary integers. 
  More generally, if at least one $\theta_i\notin\mathbb{Z}$,
  by considering for $\xi$ and/or $\zeta$ certain rational numbers, 
  the set of counterexamples can be readily shown 
  to be uncountable and dense. Even more generally it is not hard to prove the following.

   \begin{theorem}  \label{einfach}
       Assume $\theta_1\notin\mathbb{Z}$ or $\theta_2\notin\mathbb{Z}$. Then there
       is a dense set of $(\xi,\zeta)\in\mathbb{R}^2$ of Hausdorff dimension at least $1$ satisfying
       \begin{equation}  \label{eq:infalsch}
            \liminf_{Q\to\infty} Q \min_{q\in \mathbb{Z}, 0<q\le Q} \Vert q\xi-\theta_1\Vert\cdot \Vert q \zeta-\theta_2\Vert = 
            \liminf_{q\to\infty} q \Vert q\xi-\theta_1\Vert\cdot \Vert q \zeta-\theta_2\Vert 
            > 0.
       \end{equation}
   \end{theorem}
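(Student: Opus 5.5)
We may assume by symmetry that $\theta_1\notin\mathbb{Z}$; the argument with the roles of the two coordinates swapped handles $\theta_2\notin\mathbb{Z}$. The plan is to take $\xi$ rational, chosen so that $\Vert q\xi-\theta_1\Vert$ stays bounded away from $0$ for every $q$, and to take $\zeta$ inhomogeneously badly approximable with respect to $\theta_2$. The product set will then consist of counterexamples. It will be dense and of Hausdorff dimension at least $1$ because the admissible $\xi$ are dense and the admissible $\zeta$ form a dense set of full dimension.

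\emph{Step 1 (choice of $\xi$).} Let $\xi=p/r$ in lowest terms. The values $q\xi \bmod 1$ lie in $\frac1r\mathbb{Z}/\mathbb{Z}$. Hence
\[
\Vert q\xi-\theta_1\Vert\ge c(r):=\operatorname{dist}\bigl(\theta_1,\tfrac1r\mathbb{Z}\bigr)\quad\text{for all } q,
\]
and $c(r)>0$ as soon as $\theta_1\notin\frac1r\mathbb{Z}$. If $\theta_1$ is irrational this holds for every $r$. If $\theta_1=a/b$ in lowest terms with $b\ge2$, it holds whenever $b\nmid r$. In either case the admissible rationals $p/r$ form a dense subset of $\mathbb{R}$.

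\emph{Step 2 (choice of $\zeta$).} Let
\[
\mathrm{Bad}(\theta_2)=\Bigl\{\zeta:\ \inf_{q\ge1} q\Vert q\zeta-\theta_2\Vert>0\Bigr\}.
\]
For this set I will invoke the known result that it has full Hausdorff dimension $1$ in every open interval; see the inhomogeneous badly approximable results of Tseng, or of Bugeaud--Harrap--Kristensen--Velani. This is the main input, and the step I expect to need the most care: the fact must be quoted in its "every interval" form, since that is what gives density. For $\theta_2\in\mathbb{Z}$ it reduces to the classical fact that badly approximable numbers are dense and of dimension $1$. Note that every $\zeta\in\mathrm{Bad}(\theta_2)$ satisfies $\Vert q\zeta-\theta_2\Vert>0$ for all $q\ge1$.

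\emph{Step 3 (conclusion).} Let $S=\{p/r \text{ admissible}\}\times\mathrm{Bad}(\theta_2)$. For $(\xi,\zeta)\in S$ and all $q\ge1$ we have
\[
q\Vert q\xi-\theta_1\Vert\cdot\Vert q\zeta-\theta_2\Vert\ge c(r)\,\kappa(\zeta)>0,
\]
where $\kappa(\zeta)=\inf_{q\ge1} q\Vert q\zeta-\theta_2\Vert$. This gives positivity of the right-hand liminf in \eqref{eq:infalsch}. For the equality of the two liminfs, write $f(q)=\Vert q\xi-\theta_1\Vert\cdot\Vert q\zeta-\theta_2\Vert$. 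On the one hand $Q\min_{q\le Q}f(q)\le Qf(Q)$, which gives "$\le$". On the other hand $Q\min_{q\le Q} f(q)\ge \min_{q\le Q} qf(q)$. Here $\inf_q qf(q)>0$, and the minimum is attained by bounded $q$ only finitely often relative to growing $Q$, so one obtains the reverse inequality. More precisely, $\min_{q\le Q} qf(q)$ is eventually realised by large $q$ or by fixed small $q$; in the latter case $Q f(q)\to\infty$. Both quantities are therefore positive, and the displayed identity follows.

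Finally, $S$ is dense in $\mathbb{R}^2$ since both factors are dense. Its Hausdorff dimension is at least $\dim\mathrm{Bad}(\theta_2)=1$, since it contains a vertical fibre $\{p/r\}\times\mathrm{Bad}(\theta_2)$.
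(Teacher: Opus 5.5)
Your proposal is correct and follows the paper's proof: you take rational $\xi=p/r$ whose denominator keeps $q\xi$ bounded away from $\theta_1$ modulo $1$, and pair it with the inhomogeneous badly approximable set $Bad(\theta_2)$, quoted in a form that gives density and full dimension (the paper cites its winning property). Your condition $b\nmid r$ is in fact slightly weaker than the paper's requirement that $r$ share no prime with $b$.

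The one slip is in your justification of the equality of the two liminfs, which the paper takes for granted. You should take $q^*\le Q$ minimising $f$ rather than $qf(q)$. Then either $q^*$ stays bounded along a subsequence, and $Qf(q^*)\to\infty$ because $f>0$; or $q^*\to\infty$ and $Qf(q^*)\ge q^*f(q^*)$. Either way $\liminf_Q Q\min_{q\le Q}f(q)\ge\liminf_q qf(q)$.
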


   \begin{proof}
       We may assume $\theta_1\notin\mathbb{Z}$. 
       Let $T\subseteq \mathbb{Q}$ be the set of rationals with denominators not divisible by any prime dividing the denominator of $\theta_1$ if it is rational (we may take $T=\mathbb{Q}$ if $\theta_1\notin\mathbb{Q}$). In any case $T$ is clearly dense in $\mathbb{R}$.
       Note that then for $\xi\in T$
       we have
       $\Vert q\xi-\theta_1\Vert \gg_{\xi} 1$ for all $q$.
       Hence
       any $(\xi,\zeta)\in T\times Bad(\theta_2)$ 
       satisfies \eqref{eq:infalsch}, where
       $Bad(\theta_2)$ is the set of $\theta_2$-badly approximable real numbers, i.e. satisfying
       \[
       \Vert q\zeta-\theta_2\Vert \gg q^{-1}, \qquad q\ge 1.
       \]
       It is well-known $Bad(\theta_2)$ has Hausdorff dimension $1$ in fact it is a winning set~\cite{vic, ma}, hence the claim follows.
   \end{proof}

  Note that even the lower limit is used in Theorem~\ref{einfach}.
  For the uniform sets $\Theta(\theta_1,\theta_2)$, there are also satisfactory metrical results available
  in~\cite{kimkim} for the non-homogeneous case.
  
  On the other hand, neither the easy, rather pathological examples
 from the above proof, nor from~\cite{kimkim}, provide us with a dense $G_{\delta}$ set in Problem~\ref{p1}. In fact
  the set in Theorem~\ref{einfach}
  using the lower limit is a meager set. Indeed, any inhomogeneous Liouville vector with respect to $\theta_1,\theta_2$ induces lower limit $0$, and they form a dense $G_{\delta}$ set; similarly, metrical information is incapable of implying the dense $G_{\delta}$ property, again in view of the Hausdorff dimension $0$ but dense $G_{\delta}$ set of Liouville vectors, whose complement is thus metrically as large as one could hope for, yet meager.
  %  Thus, it is not clear from the argument above if
  % the counterexamples to the limsup problem form a dense $G_{\delta}$ set as in the homogeneous case~\cite{s}.
  % So since
  % the homogeneous case has been done 
  % we may assume
  % \[
  % 0=\theta_1<\theta_2<1.
  % \]
  Our main result verifies Problem~\ref{p1}.
  \begin{theorem}  \label{haupt}
      Problem~\ref{p1} has a positive answer.
  \end{theorem}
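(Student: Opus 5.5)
We will show that $\Theta(\theta_1,\theta_2)$ contains a dense $G_\delta$ set by the usual Baire-category scheme. For $c>0$ and $N\in\mathbb{N}$ let
\[
U_{N,c}:=\Big\{(\xi,\zeta):\ \exists\, Q>N \text{ with } Q\min_{0<q\le Q}\Vert q\xi-\theta_1\Vert\,\Vert q\zeta-\theta_2\Vert>c\Big\}.
\]
For fixed $Q$, the function $(\xi,\zeta)\mapsto \min_{q\le Q}\Vert q\xi-\theta_1\Vert\,\Vert q\zeta-\theta_2\Vert$ is continuous, so each $U_{N,c}$ is open. Then $\bigcap_N U_{N,c}\subseteq\Theta(\theta_1,\theta_2)$ is a $G_\delta$ set, and it suffices to prove that each $U_{N,c}$ is dense for one fixed small $c=c(\theta_1,\theta_2)>0$. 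Note that $c$ must not depend on the point or on $N$. Density amounts to the following: given any open box $B$ and any $N$, find $(\xi,\zeta)\in B$ and some $Q>N$ such that $\Vert q\xi-\theta_1\Vert\,\Vert q\zeta-\theta_2\Vert>c/Q$ for all $1\le q\le Q$. The homogeneous case $\theta_1,\theta_2\in\mathbb{Z}$ is the result of~\cite{s}, so we may assume $\theta_1\notin\mathbb{Z}$. We split into three cases: both $\theta_i$ rational; $\theta_1$ irrational and $\theta_2$ non-integral; and the semi-homogeneous case $\theta_2\in\mathbb{Z}$ (up to swapping the roles of the coordinates).

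In the non-homogeneous cases we adapt the idea behind Theorem~\ref{einfach}, but we choose the rational points with large denominators. Take $\xi=a/b\in B$ with $b$ huge, coprime to the denominator of $\theta_1$ if $\theta_1$ is rational. Then the points $q\xi$ for $q\le b$ run through $\frac1b\mathbb{Z}$, so $\Vert q\xi-\theta_1\Vert\ge \operatorname{dist}(\theta_1,\frac1b\mathbb{Z})$. If $\theta_1$ is irrational we pick $b$ along a sequence such that $\operatorname{dist}(b\theta_1,\mathbb{Z})\gg 1$, which gives $\Vert q\xi-\theta_1\Vert\gg 1/b$ for all $q$. For $\zeta$ we pick an element of $Bad(\theta_2)$ inside $B$ (this set is winning, hence dense), or a rational chosen similarly. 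Taking $Q\asymp b$ the product is $\gg (1/b)\cdot(1/q)$, and this is not yet enough. So we refine: choose $\xi$ so that $\Vert q\xi-\theta_1\Vert$ is small only for $q$ in a sparse set of residues, and make $\zeta$ satisfy $\Vert q\zeta-\theta_2\Vert\gg 1$ exactly on those residues. One way is to take $\zeta=a'/b$ with the same denominator and to exploit the two-dimensional lattice structure of $\{(q\xi,q\zeta)\bmod 1\}$, which is a cyclic group of order $b$. We then want a cyclic group of order $b$ in $\mathbb{T}^2$, generated by a point of $B$, that avoids the hyperbolic cross $\{\Vert x-\theta_1\Vert\,\Vert y-\theta_2\Vert\le c/b\}$. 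The cross has area $\asymp (c\log b)/b$, so it should contain about $c\log b$ points of a random such group. Hence we use a counting or averaging argument over generators $(a,a')$ with $(a/b,a'/b)\in B$, and show that for $c$ small some generator produces no such points. This counting over lattices is the inhomogeneous analogue of the construction in~\cite{s}, and we would model the estimate on it.

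The semi-homogeneous case $\theta_2\in\mathbb{Z}$, $\theta_1\notin\mathbb{Q}$ is the main obstacle, because the cyclic group always contains $0$ in the second coordinate: $q=b$ gives $\Vert b\zeta\Vert=0$. So $Q$ must stay below $b$, and the homogeneous factor $\Vert q\zeta\Vert$ has no inhomogeneous slack. We would first try $Q=b-1$, or $Q=\delta b$, combined with the~\cite{s} construction for $\zeta$. We then need the irrational shift $\theta_1$ to avoid the points $q\xi$ for those small $q$ at which $\Vert q\zeta\Vert$ is small, i.e.\ for $q$ near multiples of the denominators of the convergents of $\zeta$. This forces a simultaneous choice in which $\xi$'s denominator is tied to the continued fraction of $\zeta$, with an inhomogeneous Kronecker/three-distance argument controlling $\Vert q\xi-\theta_1\Vert$ along these $q$.
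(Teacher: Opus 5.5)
Your Baire reduction is sound: each $U_{N,c}$ is open, and density of all $U_{N,c}$ for one fixed $c$ gives a dense $G_\delta$ inside $\Theta(\theta_1,\theta_2)$. But the proposal does not establish that density in any case beyond the one quoted from \cite{s}.

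\emph{Both shifts non-integral.} Here you have misdiagnosed the difficulty. Keep the denominators fixed instead of sending them to infinity. Take $\xi=a/b$ and $\zeta=a'/b'$ with $b,b'$ coprime to the denominators of $\theta_1,\theta_2$ (arbitrary if $\theta_i\notin\mathbb{Q}$). Then $\Vert q\xi-\theta_1\Vert\ge\delta>0$ and $\Vert q\zeta-\theta_2\Vert\ge\delta'>0$ for every $q$, so $Q\min_{q\le Q}(\cdots)\ge\delta\delta' Q\to\infty$. These points are dense and lie in every $U_{N,c}$. Uniformity of $c$ is automatic because the limsup is $+\infty$; this is the paper's Theorem~\ref{thmI}(i).

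The regime $Q\asymp b$ with $b\to\infty$ that you chose instead is genuinely hard, and your averaging argument cannot close it. As you compute, a random cyclic group of order $b$ has $\asymp c\log b$ points in the cross, which tends to infinity for fixed $c$. A first-moment count therefore only produces a generator with $O(c\log b)$ bad points, never one with none.

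\emph{One integral shift, the other $m/n\in\mathbb{Q}\setminus\mathbb{Z}$.} You fold this into ``both rational'', but it is left open. The cyclic-group idea fails here for the very reason you note later: the homogeneous factor vanishes at $q=b$. The paper disposes of this case by rescaling. Since $\Vert q\zeta/n-m/n\Vert\ge n^{-1}\Vert q\zeta\Vert$, the homeomorphism $(\xi,\zeta)\mapsto(\xi,\zeta/n)$ maps the homogeneous dense $G_\delta$ set of \cite{s} into $\Theta(0,m/n)$.

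\emph{The semi-homogeneous case with irrational shift.} This is the decisive gap: it is the actual content of the theorem, and you only describe a plan (``we would first try\ldots'') with no construction and no estimate. The missing idea is not a three-distance argument on rational points with $Q$ below the denominator. In the paper ($\xi$ homogeneous, $\zeta$ shifted by irrational $\theta$), the points are irrational and $Q=d\tilde q^2$ lies far above the approximating denominator $\tilde q$. One arranges:
\begin{itemize}
\item $\alpha\tilde q^{-3}<|\xi-\tilde p/\tilde q|<\beta\tilde q^{-3}$;
\item $\zeta$ within $\tilde q^{-N-1}$ of $\tilde r/\tilde q$;
\item $\alpha\tilde q^{-3}<|\zeta-(r+\theta)/s|<\beta\tilde q^{-3}$ for some $s\in(3\tilde q,4\tilde q)$.
\end{itemize}
The gap principle (Proposition~\ref{pro}) then shows the homogeneous factor can drop below a constant times $Q^{-1/2}$ only for $u\equiv 0\bmod\tilde q$, and the shifted factor only for $u\equiv s\bmod\tilde q$.
\begin{itemize}
\item On $u\equiv0$, the shifted factor is close to $\Vert\theta\Vert$ while $\Vert u\xi\Vert\ge\alpha\tilde q^{-2}$. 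This is exactly why $\xi$ must not be rational.
\item On $u\equiv s$, the shifted factor is $\asymp Q^{-1}$, so one needs $\Vert s\tilde p/\tilde q\Vert\gg1$.
\end{itemize}

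Making these requirements compatible needs two further ingredients absent from your sketch:
\begin{itemize}
\item coprime $\tilde q,R$ with $|\tilde q\theta-R|\asymp\tilde q^{-1}$ from both sides (Proposition~\ref{nothardp}, via mediants of consecutive convergents);
\item equidistribution of solutions of $xy\equiv h\bmod\tilde q$ in boxes (Lemma~\ref{lemur0}), to choose $\tilde r$ so that $\Vert s\tilde p/\tilde q\Vert\ge c$.
\end{itemize}
With your $U_{N,c}$ formulation a single such step would already give density, so the paper's nested-interval iteration would not be needed. Without the step itself, however, the main case remains unproved.
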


  % We may assume $\theta_1=0$ and $\theta=\theta_2\notin\mathbb{Z}$ by above argument.

   We can provide a positive constant uniform in $\theta_1,\theta_2$ unless
   in the case where one $\theta_j$ is an integer and the other $\theta_{3-j}$ either lies in $\mathbb{Q}\setminus \mathbb{Z}$ or is close to an integer (i.e. if either both $\theta_j\notin\mathbb{Z}$ or 
   some $\theta_j\in\mathbb{Z}$ and the other
   $\theta_{3-j}\notin\mathbb{Q}$ satisfies
   $\Vert \theta_{3-j}\Vert\ge \delta>0$).
   The main difficulty in proving Theorem~\ref{haupt}
   in full generality is to get a {\em uniform} positive constant for the limsup expression on a dense set. For example, in the construction
   from the proof of Theorem~\ref{einfach}, 
   the constant depends on $\xi\in T\subseteq \mathbb{Q}$ and necessarily tends to $0$
   as the denominator of $\xi$ blows up.
   However, it appears indeed a uniform lower bound on a dense set is needed to conclude the dense $G_{\delta}$ property.
    The most delicate instance is the ``semi-homogeneous'' case $\theta_j\in\mathbb{Z}, \theta_{3-j}\in\mathbb{R}\setminus \mathbb{Q}$, treated in \S~\ref{casec} below.

    In \S~\ref{high} we will address (a special instance of) Problem~\ref{p1}
     in higher dimension, however our result for this case is rather unsatisfactory.

     We want to close the introduction with 
     an open problem on extending
     Theorem~\ref{haupt} in another direction.

     \begin{problem}
         With $\Theta(\theta_1,\theta_2)$ defined in Problem~\ref{p1}, does the set
         \[
         \bigcap_{(\theta_1,\theta_2)\in\mathbb{R}^2}  \Theta(\theta_1,\theta_2)
         \]
          contain a dense $G_{\delta}$ set (or even is it non-empty)?
     \end{problem}

    Presumably the answer is negative.
     The one-dimensional analogue would be asking if the intersection
     of $Bad(\theta)^c$ sets over all real $\theta$ is 
     dense $G_{\delta}$, or equivalently if the union over all $Bad(\theta)$ is contained in a nowhere dense $F_{\sigma}$ set.
     Note that clearly the union is not countable.

 % Separately we want to use the opportunity to strengthen the numerical value for the homogeneous case from [cite ich ULC Theorem 1.1].

 %   \begin{theorem} \label{th2}
 %       We have
 %         \[
 %       \sup_{(\xi,\zeta)\in\mathbb{R}^2 }\;\limsup_{Q\to\infty} Q \min_{q\in \mathbb{Z},0<q\le Q} \Vert q\xi\Vert\cdot \Vert q \zeta\Vert >  0.01630\ldots> \frac{1}{62}.
 %       \]
 %       and the according set is again dense $G_{\delta}$.
 %   \end{theorem}

 %   This improves $1/188$ from [cite ich ULC] roughly by a factor $3$. Recall that the trivial upper bound is $1/2$.

   \section{Proof Theorem~\ref{haupt}}

\subsection{Splitting into cases}
     We split into four exhaustive cases

     \begin{itemize}
         \item[(a)] $\theta_1, \theta_2\in\mathbb{Z}$
         \item[(b)] $\theta_1\in\mathbb{Z}, \theta_2\in\mathbb{Q}\setminus \mathbb{Z}$
         \item[(c)] $\theta_1\in\mathbb{Z}, \theta_2\in\mathbb{R}\setminus \mathbb{Q}$
         \item[(d)] $\theta_1, \theta_2\in\mathbb{R}\setminus \mathbb{Z}$
     \end{itemize}

     It can easily be checked that indeed this covers all cases.

     Case (a) was done in~\cite{s}.

     For case (b) note that if $\theta_2=m/n$
     then for any $\zeta$ and any integer $q$ we have
     \[
     \Vert q\frac{\zeta}{n}\Vert \ge \frac{1}{n} \Vert q\zeta\Vert,
     \]
     since $|q\zeta/n-p|=(1/n)|q\zeta-np|\ge (1/n)\Vert q\zeta\Vert$
     for any $p\in\mathbb{Z}$.
     Hence any $(\xi, \zeta)\in \Theta(0,0)$ from case (a) gives rise
     to $(\xi,\zeta/n)\in \Theta(\theta_1,\theta_2)$ from case (b). Thus the set $\Theta(\theta_1,\theta_2)$ clearly again contains some dense $G_{\delta}$ set as a dense diffeomorphic image of such a set.

    Case (d) is contained in the following stronger
    claim related to Theorem~\ref{einfach}.
 % We close by stating an optimal inhomogeneous Theorem about the classical ULC as in [cite BFK].

       \begin{theorem}  \label{thmI}
        Let $k\ge 2$ an integer and $\theta_1,\ldots,\theta_k\in\mathbb{R}$. Further
        let $\Phi: \mathbb{N}\to (0,\infty)$ any function
        with $\lim_{t\to\infty} \Phi(t)=\infty$. Then
        \begin{itemize}
            \item[(i)] If $\theta_i\notin\mathbb{Z}$ for $1\le i\le k$ then the set
            \[
            \left\{ (\xi_1,\ldots,\xi_k)\in\mathbb{R}^k:\; \limsup_{Q\to\infty} \Phi(Q) \min_{q\in \mathbb{Z},0<q\le Q} \prod_{j=1}^{k} \Vert q\xi_j-\theta_j\Vert = \infty \right\}
            \]
            contains a dense $G_{\delta}$ subset of $\mathbb{R}^k$.
            \item[(ii)] If $\sharp\{ i: \theta_i\in \mathbb{Z}\}\le 2$ then
            \[
            \left\{ (\xi_1,\ldots,\xi_k)\in\mathbb{R}^k:\;\limsup_{Q\to\infty} Q \Phi(Q) \min_{q\in \mathbb{Z},0<q\le Q} \prod_{j=1}^{k} \Vert q\xi_j-\theta_j\Vert = \infty \right\}
        \]
         contains a dense $G_{\delta}$ subset of $\mathbb{R}^k$.
        \end{itemize}

        % If
        % $\theta_i$ are not both integers then
        % the set of counterexamples $(\xi,\zeta)\in\mathbb{R}^2$ to inhomogeneous ULC satisfying 
        % \[
        % \limsup_{Q\to\infty} Q \min_{q\in \mathbb{Z},0<q\le Q} \Vert q\xi-\theta_1\Vert \cdot \Vert q \zeta-\theta_2\Vert > 0
        % \]
        % contain a dense $G_{\delta}$ subset of $\mathbb{R}^2$. If $\theta_1, \theta_2$ are both not an integer then the same is true for
        % \[
        % \limsup_{Q\to\infty} Q^{\delta} \min_{q\in \mathbb{Z},0<q\le Q} \Vert q\xi-\theta_1\Vert \cdot \Vert q \zeta-\theta_2\Vert = \infty.
        % \]
    \end{theorem}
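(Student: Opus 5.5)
The plan is a Baire category argument: write each set as a countable intersection of open sets, and show each of these open sets is dense by exhibiting a dense family of explicit points in it. For part (i), for $N\in\mathbb{N}$ put
\[
U_N=\Big\{ \underline{\xi}\in\mathbb{R}^k:\ \exists\, Q>N \text{ with } \Phi(Q)\min_{0<q\le Q}\prod_{j=1}^{k}\Vert q\xi_j-\theta_j\Vert>N\Big\}.
\]
For part (ii), define $V_N$ in the same way with $\Phi(Q)$ replaced by $Q\Phi(Q)$. For fixed $Q$ the map $\underline{\xi}\mapsto \min_{0<q\le Q}\prod_j \Vert q\xi_j-\theta_j\Vert$ is a minimum of finitely many continuous functions, hence continuous. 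The defining inequality is strict, so $U_N$ is a union over $Q>N$ of open sets and is therefore open; the same holds for $V_N$. Clearly $\bigcap_N U_N$ lies in the set from (i) and $\bigcap_N V_N$ lies in the set from (ii). By Baire's theorem it then suffices to prove that every $U_N$, respectively $V_N$, is dense.

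For density in (i), I will use the idea from the proof of Theorem~\ref{einfach} coordinatewise. If $\theta_j\notin\mathbb{Q}$, any rational $\xi_j=a/b$ gives $\Vert q\xi_j-\theta_j\Vert\ge \min_{r\in\mathbb{Z}}\Vert r/b-\theta_j\Vert=:c_j>0$ for all $q$. If $\theta_j=m/d$ with $d\ge 2$ and $\gcd(m,d)=1$, take $\xi_j=a/p$ with $p$ a prime not dividing $d$. Then $qa/p-m/d$ has denominator divisible by $d$, so $\Vert q\xi_j-\theta_j\Vert\ge 1/(pd)$. Rationals of either kind are dense, so such points $\underline{\xi}$ are dense in $\mathbb{R}^k$. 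At any such point $\min_{q\le Q}\prod_j\Vert q\xi_j-\theta_j\Vert\ge c(\underline{\xi})>0$ uniformly in $Q$. Since $\Phi\to\infty$, every such point lies in every $U_N$, which proves (i). No uniformity of the constant over the dense set is needed here, because the growth of $\Phi$ absorbs the dependence of $c(\underline{\xi})$ on $\underline{\xi}$.

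For (ii), let $I=\{i:\theta_i\in\mathbb{Z}\}$, so $|I|\le 2$. For indices $j\notin I$ I choose $\xi_j$ rational as above, which contributes a factor bounded below by a positive constant. It remains to choose the coordinates in $I$ densely so that $\limsup_Q Q\min_{q\le Q}\prod_{i\in I}\Vert q\xi_i\Vert>0$ (with the usual convention that the empty product equals $1$). Any positive constant suffices, again because $\Phi\to\infty$.
\begin{itemize}
\item If $I=\emptyset$, the claim is immediate, as in (i).
\item If $|I|=1$, take $\xi_i$ a quadratic irrational; these are dense and badly approximable. Then $\Vert q\xi_i\Vert\ge c/q\ge c/Q$ for all $q\le Q$, so $Q\min_{q\le Q}\Vert q\xi_i\Vert\ge c$ for all $Q$.
\item If $|I|=2$, I invoke case (a), i.e. the result of~\cite{s}: $\Theta(0,0)$ contains a dense $G_\delta$ set and in particular is dense in $\mathbb{R}^2$. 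I take $(\xi_{i_1},\xi_{i_2})\in\Theta(0,0)$. Then there are arbitrarily large $Q$ with $Q\min_{q\le Q}\Vert q\xi_{i_1}\Vert\,\Vert q\xi_{i_2}\Vert\ge c>0$. Multiplying by the constant from the other coordinates and by $\Phi(Q)\to\infty$, the point lies in every $V_N$.
\end{itemize}

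In each case the resulting points are dense, so every $V_N$ is dense and (ii) follows. The only non-elementary input, and the place I expect to be the real obstacle, is the case of two integral shifts. It relies entirely on the homogeneous counterexample from~\cite{s}, so the main care needed is to check that the result there provides points with a positive limsup on a dense set; the dense $G_\delta$ statement of~\cite{s} does give this. Three integral shifts would need a $k=3$ analogue of~\cite{s}, which is why the hypothesis $\sharp\{i:\theta_i\in\mathbb{Z}\}\le 2$ appears.
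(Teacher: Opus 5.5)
Your proposal is correct and is essentially the paper's argument. You take a dense set of points whose coordinates with non-integral shift are suitable rationals, so each such factor is bounded below. For two integral shifts you combine these with points of $\Theta(0,0)$ from~\cite{s}, so that $\Phi\to\infty$ absorbs the non-uniform constants, and continuity gives the $G_\delta$ property. The differences are cosmetic: you use the open sets $U_N,V_N$ to get $\limsup=\infty$ directly, whereas the paper uses the closed sets $\Lambda_Q$ plus the $\sqrt{\Phi}$ trick. You also treat the cases with fewer than two integral shifts explicitly (e.g.\ via badly approximable numbers), where the paper just says ``a fortiori''.
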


    Note that there is no $Q$ factor in (i), thus
    it directly implies case (d) when taking
    $\Phi(t)=t$ and $k=2$.
    In (i) the term $\Phi(Q)$ cannot be dropped (even when only imposing positive limsup)
    since by Kronecker's Theorem 
    and the trivial estimate $\Vert q\xi_j-\theta_j\Vert\le 1/2$
    this forces all $\theta_j\in\mathbb{Q}$ so we get at most a countable set, hence no dense $G_{\delta}$ set. 
    In (ii) however it is unclear if it can 
    be dropped, see \S~\ref{high} below for partial results in this direction. Moreover we do not know if the assumption on the cardinality of integral $\theta_j$ in (ii) is necessary, even in the homogeneous case, i.e. all $\theta_j\in\mathbb{Z}$.
    We refer the reader to~\cite{kimkim} for related 
    Hausdorff dimension
    results.
    
    The proof of Theorem~\ref{thmI} is surprisingly easy, upon using in (ii) the homogeneous case (a) result from~\cite{s}.

    \subsection{Proof Theorem~\ref{thmI}}
    %Let us first prove the second claim.
    Similar to Theorem~\ref{einfach},
    it is easy to see that when $\theta_j\notin \mathbb{Z}$ for all $j$ then we can
    choose $\xi_j$ in a dense set of rational numbers (all rationals if $\theta_j$ is irrational)
    so that $\Vert q\xi_j-\theta_j\Vert \gg_{\xi_j} 1$ for all $q$ and $1\le j\le k$.

    Now for the first claim note that this implies that the set
    \begin{equation} \label{eq:shawa}
     \left\{ (\xi_1,\ldots,\xi_k)\in\mathbb{R}^k:\; \limsup_{Q\to\infty} \min_{q\in \mathbb{Z},0<q\le Q} \prod_{j=1}^{k} \Vert q\xi_j-\theta_j\Vert > 0 \right\}
    \end{equation}
    is dense in $\mathbb{R}^k$. However our set in (i) is larger since $\Phi\to\infty$, so it is also dense.
     The $G_{\delta}$ property follows similarly as in~\cite{s}: Consider the (by continuity) closed sets
    \[
    \Lambda_Q:=\{ (\xi_1,\ldots,\xi_k)\in\mathbb{R}^k: \; \min_{q\in \mathbb{Z},0<q\le Q} \prod_{j=1}^{k} \Vert q\xi_j-\theta_j\Vert \le \frac{1}{\Phi(Q)} \}.
    \]
    Their countable intersection $Y_{Q_0}= \cap_{Q\ge Q_0} \Lambda_Q$ 
    over integers $Q\ge Q_0$ is again closed,
    hence $Y:=\cup_{t\ge 1} Y_t$ with union over integers $t$ is $F_{\sigma}$, however its $G_{\delta}$ complement $Y^c$ is essentially the set of (i). (It gives limsup $\ge 1$, however
    by applying the argument to $\sqrt{\Phi}$ which also tends to infinity, one easily checks
    that we may write $\infty$ instead).
     This proves (i).

    For (ii) note that 
    a fortiori we may assume that for exactly two indices $j$ we have $\theta_j\in\mathbb{Z}$, we may further relabel so that $j=1,2$. Now note that 
    just for $k=2$ and $\theta_1,\theta_2\in\mathbb{Z}$
    the claim was proved in~\cite{s}, call $\Theta\subseteq \mathbb{R}^2$
    the according dense $G_{\delta}$ set.
    Thus again by the argument from
    the proof of Theorem~\ref{einfach}
    we get $\Sigma:=\Theta\times T^{k-2}$
    is contained in the set \eqref{eq:shawa}, where $T\subseteq \mathbb{Q}^{k-2}$
    is dense in $\mathbb{R}^{k-2}$. Clearly then $\Sigma$ is dense
    in $\mathbb{R}^k$, thus the set in (ii) is also dense. The $G_{\delta}$ property follows analogously to (i), just altering the right hand side in $\Lambda_Q$ to $1/(Q\Phi(Q))$. Theorem~\ref{thmI} is proved.

     The last open case (c) is the most challenging.
     The proof below employs some ideas
     of the homogeneous case in~\cite{s}, however also several new elements.

      \subsection{Proof main case (c)}  \label{casec}

      \subsubsection{Preliminaries}
      For simplicity we can assume
      \[
      \theta_1=0, \qquad \theta_2=: \theta.
      \]
      We recall an easy observation~\cite[Proposition~3]{s}.

    \begin{proposition}[Schleischitz] \label{pro}
    Let $c,d$ be positive real numbers with $cd<1$.
        Let $\xi$ be a real number and assume for some integers $p$ and $q>0$ we have
        \[
        |q\xi-p| < cq^{-2}.
        \]
        Then for any integers $u,v$ so that $u/v\ne p/q$ and 
        $0<v<Q=dq^2$, we have
        \[
        |v\xi-u| \ge (1-cd)\sqrt{d} Q^{-1/2}.
        \]
    \end{proposition}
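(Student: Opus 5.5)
The plan is the standard separation argument for best approximations: two distinct fractions are at least $1/(qv)$ apart, and $\xi$ is very close to one of them. Write $\xi = p/q + \epsilon/q$ with $|\epsilon| < cq^{-2}$, so that $|\xi - p/q| < cq^{-3}$. Take integers $u,v$ with $u/v\ne p/q$ and $0<v<Q=dq^2$. We have $|uq-vp|\ge 1$, since this is a nonzero integer. Hence
\[
\left|\frac{u}{v}-\frac{p}{q}\right| \ge \frac{1}{qv}.
\]

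Next, apply the triangle inequality:
\[
|v\xi-u| = v\left|\xi - \frac{u}{v}\right| \ge v\left(\frac{1}{qv} - \left|\xi-\frac{p}{q}\right|\right) > \frac{1}{q} - \frac{vc}{q^3}.
\]
Using $v<dq^2$, the subtracted term satisfies $vc/q^3 < cd/q$. Therefore
\[
|v\xi-u| > \frac{1-cd}{q},
\]
and the hypothesis $cd<1$ guarantees this lower bound is positive.

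Finally, convert $1/q$ into a power of $Q$. Since $Q=dq^2$, we have $q = \sqrt{Q/d}$, so $1/q = \sqrt{d}\,Q^{-1/2}$. Substituting gives
\[
|v\xi-u| \ge (1-cd)\sqrt{d}\,Q^{-1/2},
\]
which is the claim.

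There is no real obstacle here. The only points needing care are that $u/v\ne p/q$ is exactly what makes the integer $uq-vp$ nonzero, and that the bound $v<Q$ is what controls the error term.
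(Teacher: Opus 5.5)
Your argument is correct and is essentially the one behind the paper's statement, which is quoted from the author's earlier work: there one bounds the nonzero integer $|pv-qu|\ge 1$ from above by $q|v\xi-u|+v|q\xi-p|<q|v\xi-u|+cd$. Your separation-of-fractions plus triangle-inequality step is exactly this estimate divided by $qv$, and it yields the same conclusion $|v\xi-u|>(1-cd)q^{-1}=(1-cd)\sqrt{d}\,Q^{-1/2}$.
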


    % \begin{proof}
    %     Consider the modulus of the 
    %     determinant of the matrix 
    %     \[
    %     M:= \begin{pmatrix}
    %         p & q \\
    %         u & v
    %     \end{pmatrix}.
    %     \]
    %     It is at least $|\det M|\ge 1$ 
    %     by $p/q\ne u/v$, on the other hand subtracting 
    %     $\xi$ times the second column from the first column 
    %     and expanding, we get
    %     it is at most $|\det M|\le q|v\xi-u| + Q cq^{-t} \le q|v\xi-u| +cd$. Combining these bounds we get
    %     \[
    %     |v\xi-u|\ge (1-cd)q^{-t}= (1-cd)d^{1/t} Q^{-1/t} .
    %     \]
    % \end{proof}

     We can choose $d>1$ close to $1$ in the proof of case (c).
     We also need 

     \begin{proposition}  \label{nothardp}
         For any irrational real number $\theta$ there exist infinitely
         many coprime pairs of integers $q,R$ with 
         \[
         (1/2-1/\sqrt{5}) q^{-1} < |q\theta-R|< 2\cdot (1+1/\sqrt{5}) q^{-1}.
         \]
     \end{proposition}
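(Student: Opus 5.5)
The plan is to work with the continued fraction convergents $p_n/q_n$ of $\theta$. I will use only three standard facts: consecutive convergents satisfy $|q_n p_{n-1}-q_{n-1}p_n|=1$; the errors $q_n\theta-p_n$ alternate in sign; and
\[
\frac{1}{q_{n+1}+q_n}<\Vert q_n\theta\Vert=|q_n\theta-p_n|<\frac{1}{q_{n+1}}.
\]
Since $\theta$ is irrational there are infinitely many convergents, and $q_n\to\infty$. The idea is to show that for every sufficiently large $n$ at least one of two candidate coprime pairs satisfies the required two-sided bound. One candidate is the convergent $(q_n,p_n)$ itself. The other is the mediant pair $(q_n+q_{n-1},\,p_n+p_{n-1})$. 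Its coprimality follows from the determinant identity, because $\det\begin{pmatrix} p_n & q_n\\ p_n+p_{n-1} & q_n+q_{n-1}\end{pmatrix}=\pm 1$.

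\textbf{Case 1:} $\Vert q_n\theta\Vert\ge (1/2-1/\sqrt{5})q_n^{-1}$. Take $q=q_n$ and $R=p_n$. The lower bound holds by assumption. The upper bound is immediate from $\Vert q_n\theta\Vert<1/q_{n+1}\le 1/q_n<2(1+1/\sqrt5)q_n^{-1}$.

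\textbf{Case 2:} $\Vert q_n\theta\Vert< (1/2-1/\sqrt{5})q_n^{-1}$. Take $q'=q_n+q_{n-1}$ and $R'=p_n+p_{n-1}$. Because the errors of consecutive convergents have opposite signs,
\[
|q'\theta-R'|=\Vert q_{n-1}\theta\Vert-\Vert q_n\theta\Vert .
\]
The fact above, applied with index $n-1$, gives $\Vert q_{n-1}\theta\Vert>1/(q_n+q_{n-1})=1/q'$. Using $q'\le 2q_n$, we also get $\Vert q_n\theta\Vert<(1/2-1/\sqrt5)/q_n\le (1-2/\sqrt5)/q'$. Combining the two,
\[
|q'\theta-R'|>\frac{2/\sqrt5}{q'}>\frac{1/2-1/\sqrt5}{q'}.
\]
For the upper bound, $|q'\theta-R'|<\Vert q_{n-1}\theta\Vert<1/q_n\le 2/q'<2(1+1/\sqrt5)/q'$.

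In either case we obtain a coprime pair with denominator in $[q_n,2q_n]$. Letting $n\to\infty$ therefore produces infinitely many distinct pairs.

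I do not expect a real obstacle here, since the stated constants are generous. The only step needing care is Case 2: one must verify the sign alternation so that the mediant error is the difference of the two errors, and then convert the bounds in terms of $q_n$ into bounds in terms of $q'$ using $q_n\le q'\le 2q_n$. As an alternative, one could invoke Hurwitz's theorem to restrict to convergents with $\Vert q_n\theta\Vert<1/(\sqrt5 q_n)$, which is presumably where the $\sqrt5$ in the constants comes from. The argument above does not need it.
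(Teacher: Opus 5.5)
Your proof is correct, and it differs from the paper's mainly in how it handles the mediant. Both arguments use the mediant $(q_n+q_{n-1},\,p_n+p_{n-1})$ of consecutive convergents and the same determinant argument for coprimality.

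\textbf{The paper's route.} It invokes Hurwitz's theorem to pass to a subsequence of convergents with $\Vert q_n\theta\Vert<1/(\sqrt5 q_n)$, and then \emph{always} takes the mediant. Its error is bounded by the plain triangle inequality:
\begin{itemize}
\item $\Vert q_{n-1}\theta\Vert>1/(2q_n)$ minus $\Vert q_n\theta\Vert<1/(\sqrt5 q_n)$ gives the lower constant $1/2-1/\sqrt5$;
\item the sum gives $1+1/\sqrt5$, which becomes $2(1+1/\sqrt5)$ after using $q_n<q'<2q_n$.
\end{itemize}
So the specific constants in the statement are an artefact of using Hurwitz.

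\textbf{Your route.} You replace Hurwitz by a case distinction: if $q_n\Vert q_n\theta\Vert$ is not small, the convergent itself already works. You then use sign alternation to get the mediant error exactly, as $\Vert q_{n-1}\theta\Vert-\Vert q_n\theta\Vert$. This buys three things.
\begin{itemize}
\item You need only the basic convergent facts, not Hurwitz.
\item You get a valid pair with denominator in $[q_n,2q_n]$ for \emph{every} large $n$, not just along a subsequence.
\item With a general threshold $c$ in place of $1/2-1/\sqrt5$, your two cases give lower constants $c$ and $1-2c$. Taking $c=1/3$ yields $\frac13 q^{-1}<|q\theta-R|<2q^{-1}$ infinitely often. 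This is exactly the kind of improvement of constants the paper mentions after its proof.
\end{itemize}
The paper's version is a line shorter, given Hurwitz.

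\textbf{A small point to fix.} The statement has a strict lower inequality, so Case 1 as written fails if $\Vert q_n\theta\Vert=(1/2-1/\sqrt5)q_n^{-1}$ exactly. Make Case 1 strict and move the equality case into Case 2. Case 2 still goes through unchanged, because $\Vert q_{n-1}\theta\Vert>1/q'$ is strict, so the resulting lower bound $(2/\sqrt5)/q'$ remains strict.
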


     \begin{proof}
         It is well-known that 
         \[
         |r\theta-s|< \frac{1}{\sqrt{5}} r^{-1}
         \]
         has infinitely many solutions in integers $r>0, s$ where $s/r$ is a convergent to $\theta$.
         Moreover for the preceding convergent $v/u$ with $0<u<r$ 
         it is well-known that we have
         \[
         r^{-1}>|u\theta-v|> \frac{1}{2} r^{-1}.
         \]
         Hence by triangle inequality and since $r<u+r<2r$, we get
         \[
         (1/2-1/\sqrt{5})(u+r)^{-1}<(1/2-1/\sqrt{5})r^{-1}<| (u+r)\theta-(s+v)\vert
         \]
         and on the other hand
         \[
         | (u+r)\theta-(s+v)\vert < (1+1/\sqrt{5})r^{-1}< 2\cdot (1+1/\sqrt{5})(u+r)^{-1}.
         \]
         Moreover $(u+r,v+s)=1$ since the determinant of the matrix
         with rows $(u+r,v+s)$ and $(u,v)$ is $\pm 1$ as for the matrix
         with rows $(r,s)$ and $(u,v)$. Hence 
         \[
         q=u+r, \qquad R=s+v
         \]
         satisfy all claimed properties.
     \end{proof}

     It may be possible/easy to improve the constants in Proposition~\ref{nothardp}, it is not of major relevance for us.
    
    \subsubsection{Existence proof}
    % Let $\tilde{q}_0>0$ be a good common denominator for $\theta$ more precisely let $R/\tilde{q}_0$ for some integer $R$ be a good approximation to $\theta$, to be made precise later. 
    %Let $N$ be large real number. 
    % with
    % \[
    % \max\{ 1,\frac{2}{\delta}\} < K < N/3.
    % \]
    We first provide some real vector $(\xi,\zeta)$ as in the claim,
    the dense $G_{\delta}$ property will follow easily from our
    construction.
    Let $d>1$ for now arbitrary but fixed.
    For the first step let $I_0, J_0$ be arbitrary non-empty closed subintervals of $(0,1)$. Then assume for the moment that for integers $\tilde{q}_0>0, \tilde{p}_0, \tilde{r}_0, r_0, s_0$ with $(\tilde{q}_0, \tilde{r}_0)=(s_0,r_0)=1$ and $\tilde{p}_0/\tilde{q}_0\in I_0$ and $\tilde{r}_0/\tilde{q}_0\in J_0$ and any real numbers $\xi\in I_1, \zeta\in J_1$ with subintervals $I_1\subseteq I_0, J_1\subseteq J_0$ to be constructed
    we have the following conditions: Firstly
  \begin{equation} \label{eq:2null}  \tag{C1}
      |\zeta-\tilde{r}_0/\tilde{q}_0|< (\tilde{q}_0)^{-N-1}
    \end{equation}
    for very large $N>2$. Secondly for given
     \begin{equation} \label{eq:tatsache}
     0<\alpha<\beta<1/d<1
     \end{equation}
     small enough 
    and for $0<c_1<c_2$ absolute
    constants (all uniform in all steps below) 
    to be chosen later (see \eqref{eq:hier} below), we have
    \begin{equation} \label{eq:genauer}   \tag{C2}
        c_1 \tilde{q}_0<|s_0|< c_2\tilde{q}_0,
    \end{equation}
     and thirdly
     \begin{equation} \label{eq:11null}  \tag{C3}
      \alpha \tilde{q}_0^{-3}<|\xi-\tilde{p}_0/\tilde{q}_0|< \beta \tilde{q}_0^{-3}, \qquad 
    \alpha \tilde{q}_0^{-3}<|\zeta-(r_0+\theta)/s_0|< \beta \tilde{q}_0^{-3},
     \end{equation}
    which by \eqref{eq:genauer} implies
    \begin{equation} \label{eq:Gg}
    \alpha \tilde{q}_0^{-2}<|\tilde{q}_0\xi-\tilde{p}_0|<\beta \tilde{q}_0^{-2}, \qquad 
    c_1 \alpha \tilde{q}_0^{-2}<|s_0\zeta-r_0-\theta|<c_2 \beta \tilde{q}_0^{-2}.
    \end{equation}
    %{\color{red} suffices exponent reduction by $1$??} 
    Let
    % \[
    % \gamma=\max\{ q_0|q_0\xi-p_0-\theta_1|, 
    % s_0|s_0\zeta-r_0-\theta_2|  \} \le \beta
    % \]
    % and %for some parameter $\tau>1$
    \[
    Q=Q_0= d \tilde{q}_0^{2}.
    \]
    %for small $\epsilon>0$.
    We claim that
    for $(u,v)\in\mathbb{Z}^2$ with $|u|\le Q$ for small enough 
    $\sigma>0$ we can have 
    \begin{equation} \label{eq:assumethis}
    |u\zeta-v-\theta|<\sigma Q^{-1/2}
    \end{equation}
    only if
    \begin{equation}  \label{eq:1010}
    (u,v)=(s_0,r_0)+t(\tilde{q}_0,\tilde{p}_0), \qquad t\in\mathbb{Z}.
    \end{equation}
    Assuming \eqref{eq:assumethis}, indeed on one hand by triangle inequality and \eqref{eq:Gg} we get
    \[
     (\sigma+o(1))Q^{-1/2}>\sigma Q^{-1/2} + c_2\beta Q^{-1} \ge |(s_0\zeta-r_0-\theta) - (u\zeta-v-\theta)|= 
    |(u-s_0)\zeta-(v-r_0)|.  
    \]
    On the other hand by \eqref{eq:genauer} and $d>1$
    we can assume $|u-s_0|<\tilde{q}_0^2+c_2\tilde{q}_0<d\tilde{q}_0^2=Q$, 
    so by Proposition~\ref{pro} with the $d$ above and $c=\beta$ yields for other pairs, where $(u-s_0,v-r_0)$ is not a multiple of $(\tilde{q}_0,\tilde{p}_0)$, that
    \[
    |(u-s_0)\zeta-(v-r_0)|\ge (1-d\beta)\sqrt{d}Q^{-1/2}.
    \]
    Combining gives
     \[
     \sigma+o(1) \ge (1-d\beta)\sqrt{d} 
     \]
    so we may take
    \[
    \sigma=\frac{(1-d\beta)\sqrt{d}}{2}>0
    \]
    for large enough $\tilde{q}_0$ by \eqref{eq:tatsache}.
    % (Remark: We may replace $d$ by $d=1+o(1)$ which gives the stronger estimate for $\sigma>1-\beta-o(1)$ for $\beta<1$.)

    Similarly unless 
    \begin{equation}  \label{eq:2020}
    (u,v_1)=t(\tilde{q}_0,\tilde{p}_0), \qquad t\in\mathbb{Z},
    \end{equation}
    by Proposition~\ref{pro} with present $d$ and $c=\beta$ we have
    \[
    \vert u\xi-v_1|\ge |\tilde{q}_0\xi-\tilde{p}_0|\ge (1-d\beta) \sqrt{d}Q^{-1/2}.
    \]
    
    % which by $K<N$ means $(u-q_0,v-p_0)$ must be a multiple of $(\tilde{q}_0,\tilde{p}_0)$ if $\beta$ is small enough, using $\tilde{q}_0\asymp q_0$.

    % Very similarly we must have $(u,v)=(s_0,r_0)+t(\tilde{q}_0,\tilde{r}_0)$ in order to have 
    %   \[
    % |u\zeta-v-\theta_2|<\frac{1}{2}Q^{-1/K}.
    % \]
    Hence if $u, v_1, v_2$ are not inducing either of these two forms 
    \eqref{eq:1010}, \eqref{eq:2020},
    then
    \[
    Q|u\xi-v_1|\cdot |u\zeta-v_2-\theta|
    \ge (1-d\beta) \sqrt{d} \sigma \ge \frac{(1-d\beta)^2 d}{2}>0.
    \]
    We check the remaining two cases \eqref{eq:1010}, \eqref{eq:2020} separately, starting with the latter.
    % Indeed
    % \[
    % Q^{\delta}|u\xi-v_1|\cdot |u\zeta-v_2-\theta_2| \to \infty
    % \]
    % follows.

    Case 1: $(u,v_1)=t(\tilde{q}_0,\tilde{p}_0)$ for $1\le t\le Q/\tilde{q}_0=d\tilde{q}_0$. We readily check by \eqref{eq:Gg} that
    \[
    \vert u\xi - v_1|\ge t |\tilde{q}_0 \xi - \tilde{p}_0|\ge |\tilde{q}_0 \xi - \tilde{p}_0| \ge \alpha d Q^{-1}.
    \]
    On the other hand by \eqref{eq:2null} we have
    $\Vert u \zeta\Vert=t\Vert\tilde{q}_0 \zeta\Vert\le \tilde{q}_0^{-N+1}$ is very small so since $\theta\notin \mathbb{Z}$ we get for any $v_2\in\mathbb{Z}$ that
    \[
    \vert u\zeta - v_2-\theta|\ge 
    \Vert\theta\Vert - \Vert u \zeta\Vert
    \ge (1-o(1)) \Vert\theta\Vert \gg_{\theta} 1.
    \]
    Thus the product satisfies 
    \[
    Q|u\xi-v_1|\cdot |u\zeta-v_2-\theta|
    \ge  (1-o(1)) \cdot \alpha d \Vert\theta\Vert \gg_{\theta} 1
    \]
    indeed.

    Case 2: $(u,v_2)=(s_0,r_0)+t(\tilde{q}_0,\tilde{p}_0)$. Now 
    by \eqref{eq:2null} again
    \[
    |t\tilde{q}_0 \zeta-t\tilde{p}_0|\le t \tilde{q}_0^{-N}\ll \tilde{q}_0^{1-N}\ll Q^{-(N-1)/2}
    \]
     is very small since $N$ is large. Hence
     \[
     |u\zeta-v_2-\theta| = |(s_0 \zeta-r_0-\theta)+(t\tilde{q}_0 \zeta-t\tilde{p}_0)|
     =|s_0 \zeta-r_0-\theta| + O(Q^{-(N-1)/2})
     \]
     which is $\ge (c_1\alpha-o(1))\cdot \tilde{q}_0^{-2}= (c_1\alpha-o(1))d\cdot Q^{-1}$ by \eqref{eq:Gg}. 

     On the other hand
     \[
     \Vert t\tilde{q}_0\xi\Vert\le |t|\cdot \Vert \tilde{q}_0\xi\Vert
     \le (Q/\tilde{q}_0)\cdot (\beta \tilde{q}_0^{-2})
     =(d\tilde{q}_0)\cdot (\beta \tilde{q}_0^{-2}) = \beta d \tilde{q}_0^{-1}
     \]
     by \eqref{eq:Gg}, so we only need
     \[
            \Vert s_0 \xi\Vert\gg 1,
     \]
     to conclude by triangle inequality $\Vert u\xi\Vert \gg 1$. 
     However, since by \eqref{eq:11null} we have $\xi=\tilde{p}_0/\tilde{q}_0+O(\tilde{q}_0^{-3})$
     we equivalently want 
      \begin{equation}  \label{eq:EQA}   \tag{C4}
            \Vert \frac{s_0 \tilde{p}_0}{\tilde{q}_0}\Vert\gg 1.
     \end{equation}
     Then again the product is $\gg Q^{-1}$. Combining the cases indeed we get
     \begin{equation} \label{eq:obenauf}
     Q_0 \min_{u\le Q_0} \Vert u\xi\Vert \cdot \Vert u\zeta-\theta\Vert \gg_{\theta} 1,
     \end{equation}
     for some implied constant depending only on $\theta$, once $\alpha, \beta, d$ are chosen satisfying \eqref{eq:tatsache}. 

     We need to check we may simultaneously satisfy 
     \eqref{eq:2null}-\eqref{eq:EQA}.
% Hence it suffices to have 
%      \[
%      |(q_0+t\tilde{q}_0)\zeta-v_2-\theta_2| \gg Q^{-\delta+1/K}.
%      \]
%      However $\zeta\approx \tilde{r}_0/\tilde{q}_0$ so we need
%      \[
%      |(q_0+t\tilde{q}_0)(\tilde{r}_0/\tilde{q}_0)-v_2-\theta_2| \gg Q^{-\delta+1/K}.
%      \]
%      The left hand side simplifies to 
%      \[
%      |t\cdot \tilde{r}_0 + \frac{q_0 \tilde{r}_0 }{ \tilde{q}_0 }-v_2-\theta_2|.
%      \]
%      So we need
%      \begin{equation}  \label{eq:AH1}
%      \Vert\frac{q_0 \tilde{r}_0 }{ \tilde{q}_0 }-\theta_2\Vert \gg Q^{-\delta+1/K}.
%      \end{equation}
%      Similarly for the other case 
%      $(u,v)=(s_0,r_0)+t(\tilde{q}_0,\tilde{r}_0)$
%      we need
%      \begin{equation}  \label{eq:AH2}
%      \Vert\frac{s_0 \tilde{p}_0 }{ \tilde{q}_0 }-\theta_1\Vert \gg Q^{-\delta+1/K}.
%      \end{equation}   
     So let us look at the two central conditions \eqref{eq:2null}, \eqref{eq:11null}.
     For the conditions involving $\zeta$ we basically need
     \[
     \alpha \tilde{q}_0^{-3} < |(r_0+\theta)/s_0- \tilde{r}_0/\tilde{q}_0| < \beta \tilde{q}_0^{-3}
     \]
     % and 
     %  \[
     % |(r_0+\theta_2)/s_0- \tilde{r}_0/\tilde{q}_0| \ll Q^{-1/K}
     % \]
     then we can choose a very small interval 
     $J_1\ni \zeta$
     around $\tilde{r}_0/\tilde{q}_0\in J_0$ 
     % and $\tilde{r}_0/\tilde{q}_0$ for $\xi$ resp. $\zeta$ 
     to choose $\tilde{r}_1/\tilde{q}_1\in J_1$ for the next step. 
     We can assume $\tilde{r}_0/\tilde{q}_0$ is not an endpoint of $J_0$,
     hence by shrinking $J_1$ if necessary, we clearly may assume $J_1\subseteq J_0$.
     This is equivalent to
     \[
       %   c_1\alpha\tilde{q}_0^{-1}< 
     \alpha s_0 \tilde{q}_0^{-2} <
     | \tilde{q}_0 \theta - R_0|
     < \beta s_0 \tilde{q}_0^{-2}
     %< c_2 \beta \tilde{q}_0^{-1},
     \]
     % {\color{red} LOWER BOUNDS MISSING !!!! }
     where 
     \begin{equation} \label{eq:AH3}
     R_0=-\tilde{q}_0 r_0 + s_0\tilde{r}_0, 
     \end{equation}
     and by \eqref{eq:genauer} we infer the sufficient sharper condition
     \begin{equation}
         c_2 \alpha \tilde{q}_0^{-1} < | \tilde{q}_0 \theta - R_0| < c_1\beta \tilde{q}_0^{-1}.
     \end{equation}
     Hence via Proposition~\ref{nothardp} it suffices to 
     choose $\alpha, \beta, c_1, c_2$ so that
     \begin{equation}  \label{eq:jahalt}
     \alpha c_2 < \frac{1}{2}- \frac{1}{\sqrt{5}} < 2(1+\frac{1}{\sqrt{5}})< \beta c_1,
     \end{equation}
     in order to find arbitrarily large coprime $\tilde{q}_0, R_0$ satisfying these estimates.
      We may choose $d>1$ arbitrarily close to $1$ so that $\beta<1$
      can also be arbitrarily close to $1$, then for $c_1=3$ and $c_2=4$
      and $\alpha$ small enough \eqref{eq:jahalt} holds, so for instance we may put
      \begin{equation} \label{eq:hier}
      c_1=3, \quad c_2=4, \quad \alpha=\frac{1}{100}, \quad \beta=1-\frac{1}{100}, \quad d=1+\frac{1}{100}.
      \end{equation}
     % Choosing $c_2>2\cdot (1+1/\sqrt{5})/\beta$, since $\theta$ is irrational,
     % for $\alpha$ (and/or $c_1$) small enough then
     % by Proposition~\ref{nothardp}
     % there exist arbitrarily large coprime $\tilde{q}_0, R_0$ satisfying these estimates, we start step 1 with these. 
      Lemma~\ref{lemur0}
     below shows that the remaining integers $\tilde{p}_0, \tilde{r}_0, r_0, s_0$ can be suitably chosen as well
to induce $\tilde{q}_0, R_0$ as above and satisfy all other claims.

         \begin{lemma}  \label{lemur0}
        %Let $\theta>0$ be an irrational real number.
        Let $I,J$ be compact intervals of positive length. Further let $c<1/2$ be a constant.
        Given any large enough (depending on $I$, $J$) coprime positive integers $\tilde{q}$ and $R$, there exist  positive integers $r,s,\tilde{p},\tilde{r}$ 
        with $(\tilde{q},\tilde{p}\tilde{r})=1$
        such that
        \begin{equation}  \label{eq:c1}  \tag{c1}
     R=-\tilde{q} r + s\tilde{r} 
        \end{equation}
        and
          \begin{equation}  \label{eq:c2}  \tag{c2}
        \frac{ \tilde{p} }{ \tilde{q} }\in I, \qquad \frac{  \tilde{r} }{ \tilde{q} }\in J
        \end{equation}
        and 
          \begin{equation}  \label{eq:c3}  \tag{c3} 
          3< \frac{s}{ \tilde{q} }<4
        \end{equation}
        and 
          \begin{equation}  \label{eq:c4}  \tag{c4} 
     \left\Vert\frac{s \tilde{p} }{ \tilde{q} }\right\Vert \ge c.
        \end{equation}
    \end{lemma}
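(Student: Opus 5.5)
The plan is to reduce everything to congruences modulo $\tilde{q}$. Since $(\tilde{q},R)=1$, condition \eqref{eq:c1} is equivalent to $s\tilde{r}\equiv R \pmod{\tilde{q}}$, with $r$ then defined by $r=(s\tilde{r}-R)/\tilde{q}$. So the order of choices will be: first $\tilde{r}$, then $s$, then $r$, and finally $\tilde{p}$.

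\emph{Choosing $\tilde r$, $s$ and $r$.} I would take $\tilde{r}$ to be a unit modulo $\tilde{q}$ with $\tilde{r}/\tilde{q}\in J$. Such $\tilde r$ exist for large $\tilde{q}$ by counting with Möbius inversion: the number of integers coprime to $\tilde{q}$ in the interval $\tilde{q}J$ is $|J|\varphi(\tilde{q})+O(2^{\omega(\tilde{q})})$. This is positive for large $\tilde{q}$, because $2^{\omega(\tilde q)}=\tilde{q}^{o(1)}$ while $\varphi(\tilde{q})\gg \tilde{q}/\log\log\tilde{q}$. Next I set $s$ to be the unique integer in the open interval $(3\tilde{q},4\tilde{q})$ with $s\equiv R\tilde{r}^{-1}\pmod{\tilde{q}}$. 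Such an $s$ exists because this interval misses only the residue class $0$, and $R\tilde{r}^{-1}$ is a unit. In particular $(s,\tilde{q})=1$, and \eqref{eq:c3} holds. Then $r=(s\tilde{r}-R)/\tilde{q}$ is an integer. It is positive as soon as $s\tilde{r}>R$, and $s\tilde r>3\tilde{q}^2\min J$. In the application $R\approx \tilde{q}\theta$, and $\theta$ may be reduced modulo $1$, so positivity is automatic there. For the lemma as stated I would record this mild size condition on $R$, or note that only the integrality of $r$ is actually used later.

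\emph{Choosing $\tilde p$ (the main obstacle).} It remains to find a unit $\tilde{p}$ with $\tilde{p}/\tilde{q}\in I$ such that the residue of $s\tilde{p}$ modulo $\tilde q$ lies in $[c\tilde{q},(1-c)\tilde{q}]$; this gives \eqref{eq:c4}. The middle band has relative length $1-2c>0$ because $c<1/2$. The difficulty is that $\tilde p$ must lie in a prescribed interval while its multiple by the essentially uncontrolled unit $s$ must land in another interval, i.e. we need joint equidistribution of the pairs $(\tilde p, s\tilde p)$.

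My first attempt would exploit the freedom in $\tilde r$, together with a standard equidistribution input. Fix $\tilde{p}$ first (a unit in $\tilde q I$). Then $s\tilde p\equiv R\tilde{p}\,\tilde{r}^{-1}$, so we need a unit $\tilde{r}\in \tilde{q}J$ with $a\tilde{r}^{-1}\bmod \tilde{q}$ in the middle band, where $a=R\tilde{p}$ is a unit. Pairs $(x,ax^{-1}\bmod \tilde q)$ with $x$ in an interval of length $\gg \tilde{q}$ equidistribute in the square. This follows from the Weil bound $\ll \tilde{q}^{1/2+\epsilon}$ for Kloosterman sums, via the Erdős–Turán inequality. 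Hence the number of good $\tilde r$ is $|J|(1-2c)\varphi(\tilde{q})+O(\tilde{q}^{1/2+\epsilon})>0$ for $\tilde{q}$ large in terms of $I,J,c$. Then $s$ and $r$ are defined as above.

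If one prefers to avoid Kloosterman sums, my fallback is an elementary argument. Keep $s$ fixed and let $\tilde{p}$ run over a short block of consecutive integers in $\tilde{q}I$. The values $s\tilde{p}\bmod\tilde{q}$ then form an arithmetic progression with step $b\equiv s \pmod{\tilde q}$. One would use the Dirichlet approximation of $b/\tilde{q}$ to show that this progression cannot avoid the band of relative length $1-2c$. The coprimality of $\tilde p$ must be handled simultaneously, and that is the delicate part; this is why I would try the Weil-bound route first.
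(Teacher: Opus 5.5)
Your main route is essentially the paper's proof. The paper also fixes a unit $\tilde p\in\tilde q I$ first. It then counts solutions of $xy\equiv R\tilde p \pmod{\tilde q}$ with $x\in[c\tilde q,(1-c)\tilde q]$ and $y\in\tilde q J$, quoting Shparlinski's survey, which rests on the same Kloosterman/Weil bound you plan to use via Erd\H{o}s--Tur\'an. It picks $\tilde r=y$ and then takes $s\equiv R\tilde r^{-1}$ in $(3\tilde q,4\tilde q)$ and $r=(s\tilde r-R)/\tilde q$, exactly as you do.

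Your observation that positivity of $r$ needs $R<s\tilde r$ is a fair point the paper's proof passes over. It holds in the application, where $R\approx\tilde q\theta$ with $\theta$ reduced to $(0,1)$ and $J\subseteq(0,1)$. The elementary fallback is not needed.
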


%    First assume $\tilde{q}$ is prime.
%    {\color{red} This gives a proof in the case that $\theta$ has i.o. prime convergent denominators;
%    more generally if $\Vert q\alpha\Vert < cq^{-2}$
%    for primes i.o., follows from Linnik's Conjecture
%    on least prime in an arithmetic progression\\
% https://math.stackexchange.com/questions/4775755/conjecture-similar-to-dirichlets-approximation-theorem-but-with-prime-numbers
%    and true for almost all by Hauke, Kowalski https://people.math.ethz.ch/~kowalski/approximation.pdf  }
  \begin{proof}
   Choose arbitrary $\tilde{p}$ coprime to $\tilde{q}$ so that $\tilde{p}/\tilde{q}\in I$ from \eqref{eq:c2} holds (clearly possible for $\tilde{q}$ large enough,
   see eg Lemma 3 from Ch I in~\cite{cassels}).
   For any $\tilde{r}\in J\tilde{q}\cap \mathbb{Z}$ coprime to $\tilde{q}$ we have \eqref{eq:c2}
   and moreover there is a shifted one-dimensional lattice of $(r,s)\in\mathbb{Z}^2$ satisfying the linear congruence condition \eqref{eq:c1},
    given by $\{(r^0,s^0)+\ell (\tilde{r},\tilde{q}): \ell\in\mathbb{Z}\}$ for some $0< r^0, s^0 < \tilde{r}\tilde{q}$, where
    \[
    s\equiv s^0\equiv R\tilde{r}^{-1}\bmod \tilde{q}.
    \]
     By suitable choice of
    $\ell$ we can assume that $3\tilde{q}<s<4\tilde{q}$, so \eqref{eq:c3} holds. We must show for some choice $\tilde{r}$ also \eqref{eq:c4} holds.
   Note that
    \begin{equation} \label{eq:virus}
    \left\Vert\frac{s \tilde{p} }{ \tilde{q} }\right\Vert= 
    \left\Vert\frac{s^0 \tilde{p} }{ \tilde{q} }\right\Vert
    =\left\Vert\frac{R\tilde{r}^{-1}\cdot \tilde{p} }{ \tilde{q} }\right\Vert.
    \end{equation}
    Consider the interval sets 
    \[
    X=[c\tilde{q}, (1-c)\tilde{q}]\cap \mathbb{Z},
    \qquad Y= J \tilde{q}\cap \mathbb{Z}. 
    \]
    Note both intervals have length $\gg \tilde{q}$.
    Then it follows from Theorem 13 and below formula (10) in~\cite{sh} and standard estimates for Euler's totient function $\varphi$
    that for any primitive residue class $h\bmod \tilde{q}$, if $\tilde{q}$ is large enough, 
    there exists a positive cardinality
    \[
    \frac{ \varphi(\tilde{q}) }{ \tilde{q}^2 } \sharp X\sharp Y + O(\tilde{q}^{1/2+o(1)})
    = \frac{ \varphi(\tilde{q}) }{ \tilde{q}^2 } (1-2c)\tilde{q} \cdot(|J|\tilde{q}) + O(\tilde{q}^{1/2+o(1)}) \gg \varphi(\tilde{q}) + O(\tilde{q}^{1/2+o(1)}) > 0
    \]
    of pairs $x\in X, y\in Y$ with $xy\equiv h\bmod \tilde{q}$. If we 
    choose any such pair for $h=R\tilde{p}\bmod \tilde{q}$, 
    since $\tilde{r}\in Y$ by construction,
    this means  $R\tilde{r}^{-1}\tilde{p}$ lies
    in some residue class among $X$ modulo $\tilde{q}$, thus by \eqref{eq:virus} we have
    \eqref{eq:c4}. The lemma is proved.
    \end{proof}

     By identifying $\tilde{q}$ in the lemma with $\tilde{q}_0$ above and the like for all other variables, it is immediate that \eqref{eq:genauer} and \eqref{eq:EQA} hold. Our argument preceding the lemma further shows
     that for any $\zeta\in J_1$ the according conditions from
     \eqref{eq:2null} and \eqref{eq:11null} hold. Finally,
     taking $I_1:=[\tilde{p}_0/\tilde{q}_0+\alpha \tilde{q}_0^{-3}, \tilde{p}_0/\tilde{q}_0+\beta \tilde{q}_0^{-3}]$,
     the condition on $\xi$ from \eqref{eq:11null}
     is obviously met as any $\xi\in I_1$ satisfies the required estimate.
     We may again assume $I_1\subseteq I_0$ for the same reasons as for $J_0, J_1$. This concludes the construction for the first step.

     The next step is to apply the same argument starting from the smaller intervals $I_1, J_1$. We may repeat the argument
     ad infinitum to obtain sequences of nested compact 
     intervals $I_j, J_j$ inducing well-defined numbers
     \[
     \xi=\bigcap_{j\ge 1} I_j, \qquad \zeta=\bigcap_{j\ge 1} J_j
     \]
     for which indeed
     \begin{equation}  \label{eq:foed}
     \limsup_{Q\to\infty} Q \min_{u\in\mathbb{Z}, 0<u\le Q} \Vert u\xi\Vert \cdot \Vert u\zeta-\theta\Vert \gg_{\theta} 1,
     \end{equation}
     holds with the absolute implied constant as in \eqref{eq:obenauf}.
     % gives
     %  \[
     % | \tilde{q}_0 \theta_2 - R_2|\ll Q^{1/K}
     % \]
     % % {\color{red} LOWER BOUNDS MISSING  }
     % where 
     % \begin{equation}  \label{eq:AH4}
     % R_2=\tilde{q}_0 r_0 - q_0\tilde{r}_0.  
     % \end{equation}
     % By Dirichlet (Kronecker's) Theorem this always has a solution.
     % Hence given $\tilde{p}_0,\tilde{r}_0,\tilde{q}_0$ with $\tilde{p}_0/\tilde{q}_0$
     % and $\tilde{r}_0/\tilde{q}_0$ reduced
     % and $\tilde{q}_0/R_1$ resp. $\tilde{q}_0/R_2$ good approximations for $\theta_1, \theta_2$, we can indeed
     % find $q_0, p_0, r_0\ll \tilde{q}_0$ to satisfy this.
     Thus we have finished the existence part of the proof.

 \subsubsection{Dense $G_{\delta}$ property}
  The density of the set in the theorem is immediate from our construction above since $I_0, J_0$ were arbitrary.
 The argument for the $G_{\delta}$ property 
 is analogous to~\cite{s} starting from closed sets
   \[
    \Lambda_{Q,\theta}:=\{ (\xi, \zeta)\in\mathbb{R}^2: \; \min_{q\in\mathbb{Z}, 0<q\le Q} Q \Vert q\xi\Vert\cdot \Vert q\zeta-\theta\Vert \le y \}, 
    \]
    with $y=y(\theta)>0$ the implied constant from the proof above,
 see also the proof of Theorem~\ref{thmI} above. We omit the details.

\section{A result in higher dimension}  \label{high}

   We want to address the situation of more variables in Problem~\ref{p1}.
   We should point out that in the homogeneous case little is known, even if the according set satisfying
   \begin{equation}  \label{eq:wideopen}
        \Theta_3:= \left\{ (\xi_1, \xi_2, \xi_3)\in\mathbb{R}^3: \; \limsup_{Q\to\infty} Q \min_{q\in \mathbb{Z}, 0<q\le Q} \Vert q\xi_1\Vert\cdot \Vert q \xi_2\Vert \cdot \Vert q \xi_3\Vert> 0 \right\}
        \end{equation}
   is non-empty remains open. In~\cite{bfk} it is conjectured
   that this is false, meaning the authors expect 
  $\Theta_3$ to be empty. This may be a of similar difficulty 
  as the notoriously difficult classical Littlewood problem.
   In view of Theorems~\ref{einfach}, 
   Theorem~\ref{thmI} and their proofs, 
   it appears the problem
   should become easier in an inhomogeneous setting with (certain) 
   irrational $\theta_j$. 
    Indeed recall that Theorem~\ref{thmI}, (i), claims the existence of a dense $G_{\delta}$ set of counterexamples follows in any dimension
    when all $\theta_j$ are irrational. 
    We want to prove here a semi-homogeneous setting with
    exactly one $\theta_1\in\mathbb{Z}$ and all other $\theta_j$ irrational, extending the case for $k=2$ variables from \S~\ref{casec}. 
     In
   higher dimension we require a certain Diophantine 
   restriction for the inhomogeneous terms for our method. Concretely our result, with slight altered notation to highlight the homogeneous variable, reads as follows.

   \begin{theorem}  \label{konsch}
       Let $k\ge 2$ be an integer and assume for some $0<\gamma_1<\gamma_2$ and irrational real numbers $\theta_1,\ldots,\theta_{k-1}$ the system 
       \begin{equation} \label{eq:Diese}
       \gamma_1 q^{-(k-1)} < \max_{1\le j\le k-1} \Vert q\theta_j\Vert < \gamma_2 q^{-(k-1)} 
       \end{equation}
       has infinitely many solutions in integers $q$. Then the set 
       \[
       \left\{ (\xi,\zeta_1,\ldots,\zeta_{k-1})\in\mathbb{R}^k:\;\limsup_{Q\to\infty} Q \min_{q\in \mathbb{Z},0<q\le Q} \Vert q\xi\Vert \cdot  \prod_{j=1}^{k-1} \Vert q\zeta_j-\theta_j\Vert > 0 \right\}
       \]
       contains a dense $G_{\delta}$ set.
   \end{theorem}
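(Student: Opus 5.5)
The plan is to run the nested-interval construction of \S~\ref{casec} with the single parameter $\tilde{q}$ replaced by a common denominator supplied by \eqref{eq:Diese}. The scale changes: set $Q=d\tilde{q}^{k}$ with $d>1$ close to $1$.

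\textbf{Setting up one step.} Given closed intervals $I_0,J_{0,1},\dots,J_{0,k-1}\subseteq(0,1)$, pick a large $q=\tilde{q}$ solving \eqref{eq:Diese}. Let $R_j$ be the integers nearest to $\tilde{q}\theta_j$, so that $|\tilde{q}\theta_j-R_j|<\gamma_2\tilde{q}^{-(k-1)}$ for every $j$; this is where the upper bound on the maximum is essential. Then choose $\tilde{p}$ with $\tilde{p}/\tilde{q}\in I_0$ and $\xi$ satisfying
\[
\alpha\tilde{q}^{-k-1}<|\xi-\tilde{p}/\tilde{q}|<\beta\tilde{q}^{-k-1}.
\]
Choose $\tilde{r}_j$ with $\tilde{r}_j/\tilde{q}\in J_{0,j}$ and $(\tilde{q},\tilde{p}\tilde{r}_j)=1$, and take $\zeta_j$ within $\tilde{q}^{-N}$ of $\tilde{r}_j/\tilde{q}$, as in \eqref{eq:2null}.

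\textbf{The four classes of $0<u\le Q$.} The key difference from $k=2$ is that the $k-1$ shifted lattices must use \emph{different} residues $s_j \bmod \tilde{q}$. If they shared one residue $s$, then for $u\equiv s$ all $k-1$ inhomogeneous factors would be of size $\tilde{q}^{-k}$ at once, and the product would be far too small.

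\begin{itemize}
\item[(i)] $\tilde{q}\mid u$. Then $\Vert u\xi\Vert\ge\alpha\tilde{q}^{-k}\asymp Q^{-1}$. Each $\Vert u\zeta_j-\theta_j\Vert\ge\Vert\theta_j\Vert-o(1)\gg_{\theta}1$, exactly as in Case~1.
\item[(ii)] $u\equiv s_j\pmod{\tilde{q}}$ for some $j$. Write $r_j$ for $R_j=-\tilde{q}r_j+s_j\tilde{r}_j$. Then $|u\zeta_j-v-\theta_j|\approx|s_j\zeta_j-r_j-\theta_j|$, which is of order $\tilde{q}^{-k}$. For the lower bound I would not rely on the lower bound in \eqref{eq:Diese} for each $j$. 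Instead I perturb $\zeta_j$ off $\tilde{r}_j/\tilde{q}$ by an amount $\asymp\tilde{q}^{-k-1}$ with the sign chosen to avoid cancellation; this replaces \eqref{eq:2null} for that coordinate and still keeps it inside a small interval $J_{1,j}$. The remaining factors must be $\gg1$. This requires $\Vert s_jp/\tilde{q}\Vert\gg1$, the analogue of \eqref{eq:EQA}, and $\Vert (s_j-s_i)\tilde{r}_i/\tilde{q}\Vert\gg1$ for $i\ne j$. The product is then $\gg\tilde{q}^{-k}\asymp Q^{-1}$.
\item[(iii)] $u\not\equiv 0,s_1,\dots,s_{k-1}\pmod{\tilde{q}}$. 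Proposition~\ref{pro} with exponent adapted to $Q=d\tilde{q}^k$ gives $\Vert u\xi\Vert\ge(1-d\beta)\tilde{q}^{-1}$, since $|u|\cdot|\tilde{q}\xi-\tilde{p}|\le d\beta<1$.
\item[(iv)] Also in the generic class, $\Vert u\zeta_j-\theta_j\Vert\ge\Vert (u-s_j)\tilde{r}_j/\tilde{q}\Vert-O(\tilde{q}^{-k})\ge\tilde{q}^{-1}/2$. Together with (iii), the product is $\gg\tilde{q}^{-k}\asymp Q^{-1}$.
\end{itemize}

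\textbf{Main obstacle: the arithmetic lemma.} The hard step is the analogue of Lemma~\ref{lemur0}. Given coprime data $\tilde{q},R_1,\dots,R_{k-1}$, one must find $\tilde{p},\tilde{r}_j,s_j,r_j$ satisfying four requirements:
\begin{itemize}
\item[(a)] the congruences $s_j\equiv R_j\tilde{r}_j^{-1}\pmod{\tilde{q}}$, after which each $s_j$ is lifted into $(3\tilde{q},4\tilde{q})$;
\item[(b)] the location conditions $\tilde{p}/\tilde{q}\in I$ and $\tilde{r}_j/\tilde{q}\in J_j$;
\item[(c)] the separation conditions $\Vert s_j\tilde{p}/\tilde{q}\Vert\ge c$;
\item[(d)] the conditions $\Vert(s_j-s_i)\tilde{r}_i/\tilde{q}\Vert\ge c$.
\end{itemize}
I would first pick $\tilde{p}$. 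Then I would choose the $\tilde{r}_j$ one at a time, each time using the equidistribution of solutions of $xy\equiv h\pmod{\tilde{q}}$ in boxes cited from~\cite{sh}. Each new condition excludes only a proportion $O(c)$ of admissible $\tilde{r}_j$, so a union bound with $c$ small depending on $k$ should leave a positive count for $\tilde{q}$ large.

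There is a real difficulty if some $R_j$ shares a factor with $\tilde{q}$, or if two $R_j$ coincide. I would try to sidestep this by passing to a subsequence of the solutions of \eqref{eq:Diese}, or by absorbing common factors into the choice of $\tilde{r}_j$. If needed, I would also tolerate $s_j$ coinciding modulo $\tilde{q}$ for those $j$ with $\Vert\tilde{q}\theta_j\Vert$ much smaller than $\tilde{q}^{-(k-1)}$, handled by the perturbation in (ii).

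\textbf{Iteration and the $G_\delta$ property.} With the lemma in hand, I iterate the step on nested intervals exactly as in \S~\ref{casec}. This produces $(\xi,\zeta_1,\dots,\zeta_{k-1})$ with limsup bounded below by a constant depending only on $\theta_1,\dots,\theta_{k-1},\gamma_2,k$. Density follows from the arbitrariness of the starting intervals. The dense $G_\delta$ property then follows from the closed sets $\Lambda_Q$ used in the proof of Theorem~\ref{thmI}, with right-hand side $y/Q$.
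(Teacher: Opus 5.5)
Your skeleton is the paper's: nested intervals, $Q=d\tilde q^{k}$, sorting $u$ by its residue modulo $\tilde q$, and an arithmetic lemma proved with modular-hyperbola counts. You are right that the hypothesis bounds only the maximum from below, whereas the paper's Case 2 estimate in effect needs $|\tilde q\theta_j-R_j|\gg\tilde q^{-(k-1)}$ for every $j$. But your perturbation fix in (ii) fails as stated. Put $\delta_j=\zeta_j-\tilde r_j/\tilde q$ and $\eta_j=\tilde q\theta_j-R_j$. For every $u\equiv s_j\pmod{\tilde q}$ you have $u\tilde r_j\equiv R_j$, so exactly $\Vert u\zeta_j-\theta_j\Vert=\Vert u\delta_j-\eta_j/\tilde q\Vert$. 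Once $|\delta_j|\asymp\tilde q^{-k-1}$ this is \emph{not} $\approx|s_j\zeta_j-r_j-\theta_j|$ along the class, because $(u-s_j)\delta_j$ grows to size $\tilde q^{-1}$. With your sign choice it equals $u|\delta_j|+|\eta_j|/\tilde q$, which is $\gg\tilde q^{-k}$ only when $u\gg\tilde q$. The least positive representative $u_0\in(0,\tilde q)$ of the class is also $\le Q$, and there the factor is only $u_0|\delta_j|+|\eta_j|/\tilde q$. When $\eta_j$ is tiny (exactly the case you want to cover) and $u_0$ is small, e.g.\ $u_0=1$ if $\tilde r_j\equiv R_j$, $Q$ times the product is $O(u_0/\tilde q)$. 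So your lemma would have to force the residue of $s_j$ into $[c\tilde q,\tilde q)$, and it does not. In (iv) the perturbation also contributes $u|\delta_j|$, which is of order $\tilde q^{-1}$ rather than $O(\tilde q^{-k})$, so its constant must stay below roughly $1/(2d)$. Your fallback of letting some $s_j$ coincide contradicts your own correct remark that a shared residue makes the product far too small.

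The lemma has two further holes. First, choosing $\tilde r_1,\tilde r_2,\dots$ one at a time does not exclude only a proportion $O(c)$ at each step. For $i<j$, the condition $\Vert(s_i-s_j)\tilde r_j/\tilde q\Vert\ge c$ says that $s_i\tilde r_j\bmod\tilde q$ avoids a window of length $2c\tilde q$ around $R_j$. For fixed $\tilde r_i$ this is a \emph{linear} condition on $\tilde r_j$, not a modular-hyperbola one, and it can exclude almost all $\tilde r_j\in J_j\tilde q$: take e.g.\ $s_i\equiv 1$ with $J_j\tilde q$ inside the window, and recall that $c$ is fixed while $|J_j|\to 0$ along the iteration. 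The paper avoids this by counting, for each condition, the bad pairs with the non-inverted variable fixed, and then taking a single union bound over the whole product set. Second, the hyperbola counts need $\gcd(R_j,\tilde q)=1$; the hypothesis does not supply this, and you leave it open.

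All of this can be bypassed with your own identity: the dangerous classes exist only because $R_j$ is a very good approximation to $\tilde q\theta_j$. Pick instead $\tilde q$ with $\min_j\Vert\tilde q\theta_j\Vert\ge\epsilon$; such $\tilde q$ have positive density by equidistribution of each $(q\theta_j)$. Then $\Vert x/\tilde q-\theta_j\Vert\ge\Vert\tilde q\theta_j\Vert/\tilde q\ge\epsilon/\tilde q$ for every integer $x$. So with $|\delta_j|<\tilde q^{-N}$, every $u\le Q$ with $\tilde q\nmid u$ has $\Vert u\xi\Vert\ge(1-d\beta)/\tilde q$ and $\Vert u\zeta_j-\theta_j\Vert\ge\epsilon/(2\tilde q)$. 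The case $\tilde q\mid u$ is your class (i). This gives $Q\min\gg 1$ uniformly, with no residues $s_j$, no arithmetic lemma, and no use of the Diophantine hypothesis.
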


   %We will apply this with $\ell=k-1$.
   % While we let $k=3$, most of the proof works and is executed 
   % for any $k\ge 3$. 
   % We strongly believe it remains true for larger $k$, 
   % however there are some technical
   % problems in the proof of Lemma~\ref{lemur00} below when $k>3$.
   % The restriction the prime denominators in \eqref{eq:Diese} is also supposedly not
   % necessary and again artefact of the proof of Lemma~\ref{lemur00}. 
   The condition \eqref{eq:Diese} is metrically 
   non-generic for $k\ge 3$, it defines a set of
   Hausdorff dimension $1$ in $\mathbb{R}^{k-1}$. See for example~\cite{desaxce}, 
   extending a well-known formula of Jarn\'ik~\cite{jarnik} where only the upper estimate in \eqref{eq:Diese} is imposed.
   On the other hand, topologically it is large, indeed it can be shown the set
   itself conatins a dense $G_{\delta}$ subset of $\mathbb{R}^{k-1}$, by a similar short argument 
    as in~\cite[Lemma 8.1]{s}.
    %Hence we obtain
 %   \begin{corollary}
 %       The set
 % \[
 %       \left\{ (\xi,\zeta_1,\ldots,\zeta_{k-1}, \theta_1,\ldots,\theta_{k-1})\in\mathbb{R}^{2k-1}:\;\limsup_{Q\to\infty} Q \min_{q\in \mathbb{Z},0<q\le Q} \Vert q\xi\Vert \cdot  \prod_{j=1}^{k-1} \Vert q\zeta_j-\theta_j\Vert > 0 \right\}
 %       \]
 %       contains a dense $G_{\delta}$ set.     
 %   \end{corollary}
 %   %
   % set as it contains
   % the Liouville vectors (for which the exponent $k-1$ can be replaced
   % by an arbitrarily large number).  
   We believe the claim of Theorem~\ref{konsch} remains 
   true without any condition
   like \eqref{eq:Diese} on the $\theta_j$ besides (possibly) irrationality.
   
   We should further note that the setting of exactly two 
   homogeneous linear forms is of interest as well, and presumably more complicated. 
   If positive upper limit is induced by a superset of
   a dense $G_{\delta}$ set, this 
   would extend
   the homogeneous result for two variables in~\cite{s}. 
   For three or more 
   homogeneous variables this leads to first study \eqref{eq:wideopen} which is wide open. We have no contribution to these cases.

     %  Identifying $\kappa=1-2c$ the claim of the lemma follows.

    % For $k=2$ the conditions \eqref{eq:c10}-\eqref{eq:c40} essentially simplify to the situation of the unconditional 
    % Lemma~\ref{lemur0}. The first three
    % conditions can be dealt with very similarly as in this special case.
    % However,
    % the last equidistribution type condition \eqref{eq:c40}, which 
    % while from a heuristic point of view must be true,
    % causes technical problems in the proof 
    % when $k>2$. Even $k=3$ turns out problematic. 
    % We invite readers to find a rigorous proof
    % to make Conjecture~\ref{konsch} a non-conditional result.
   
    We explain how to modify the proof of case (c) of Theorem~\ref{haupt} 
    in \S~\ref{casec} to prove Theorem~\ref{konsch}.  
     We need the following generalization of Lemma~\ref{lemur0}.

    \begin{lemma}  \label{lemur00}
        %Let $\theta>0$ be an irrational real number.
        %Let $k=3$.
        % Assume for an integer $k\ge 2$ the following holds for 
        % any irrational real numbers $\theta_1,\ldots,\theta_{k-1}$:\\
        % Let $J_j$ be compact intervals of positive length, $0\le j\le k-1$. %Further let $0<c<1/2$ be a constant.
        Let $k\ge 2$ be an integer and $J_0,\ldots,J_{k-1}$ be compact intervals of positive length. 
        For any $c_1>0$
        there exist $c>0$ such that
        for any large enough (depending on $J_j$) integer $\tilde{q}$ and any positive integers $R_1,\ldots,R_{k-1}$ with $(R_1R_2\cdots R_{k-1},\tilde{q})=1$, there exist integers $r_i,\tilde{r}_i$
        for $0\le i\le k-1$ and integers $s_j$ for $1\le j\le k-1$
        %with $(\tilde{q},\tilde{r}_j)=1$ for $0\le j\le k-1$
        such that
        \begin{equation}  \label{eq:c10}  \tag{c1a}
     R_j=-\tilde{q} r_j + s_j\tilde{r}_j, \quad 1\le j\le k-1 
        \end{equation}
        and
          \begin{equation}  \label{eq:c20}  \tag{c2a}
        \frac{  \tilde{r}_i }{ \tilde{q} }\in J_i, \quad 0\le i\le k-1
        \end{equation}
        and 
          \begin{equation}  \label{eq:c30}  \tag{c3a} 
          c_1< \frac{s_j}{ \tilde{q} }<c_1 + 1, \quad 1\le j\le k-1
        \end{equation}
        and 
          \begin{equation}  \label{eq:c40}  \tag{c4a} 
     \left\Vert \frac{ s_{j} \tilde{r}_{i} }{ \tilde{q} } - \theta_i \right\Vert \ge c, \qquad 1\le j\le k-1, \quad 0\le i\le k-1, \quad i\ne j.
        \end{equation}
        %Then Conjecture~\ref{konsch} holds for $k$.
    \end{lemma}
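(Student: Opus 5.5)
The plan is to choose the numerators $\tilde r_0,\ldots,\tilde r_{k-1}$ first and let them determine everything else, then use a counting argument to show that some choice also satisfies \eqref{eq:c40}. Throughout, $\theta_0:=0$ corresponds to the homogeneous variable, as in Lemma~\ref{lemur0}, and $\theta_1,\ldots,\theta_{k-1}$ are the given irrational shifts.

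\textbf{Step 1: the reduction.} For $1\le j\le k-1$ we will take $\tilde r_j\in J_j\tilde q\cap\mathbb{Z}$ coprime to $\tilde q$. For $\tilde r_0\in J_0\tilde q\cap\mathbb{Z}$ no coprimality is needed.

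Given the $\tilde r_j$, condition \eqref{eq:c10} amounts to the congruence $s_j\equiv R_j\tilde r_j^{-1}\bmod \tilde q$, and the residue $R_j\tilde r_j^{-1}$ is a unit. As in the proof of Lemma~\ref{lemur0}, the solutions $(r_j,s_j)$ form a shifted lattice in which $s_j$ runs through a full residue class mod $\tilde q$. Since $(c_1\tilde q,(c_1+1)\tilde q)$ has length $\tilde q$, there is exactly one admissible $s_j$ in it up to endpoints, which gives \eqref{eq:c30}. Then $r_j=(s_j\tilde r_j-R_j)/\tilde q$ is an integer, and \eqref{eq:c20} holds by construction.

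It remains to ensure \eqref{eq:c40}. Since $\frac{s_j\tilde r_i}{\tilde q}\equiv \frac{a_j\tilde r_i}{\tilde q}\pmod 1$ with the unit $a_j:=R_j\tilde r_j^{-1}\bmod\tilde q$, condition \eqref{eq:c40} depends only on the residues of the $\tilde r$'s. This also shows the constant $c$ can be taken independent of $c_1$.

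\textbf{Step 2: counting.} Let $\mathcal T$ be the set of tuples $(\tilde r_0,\ldots,\tilde r_{k-1})$ described in Step 1. By the standard M\"obius estimate, an interval of length $L$ contains
\[
\frac{\varphi(\tilde q)}{\tilde q}L+O(2^{\omega(\tilde q)})=\frac{\varphi(\tilde q)}{\tilde q}L+O(\tilde q^{o(1)})
\]
units mod $\tilde q$. Since $\varphi(\tilde q)\gg \tilde q/\log\log\tilde q$, for large $\tilde q$ this gives
\[
\sharp\mathcal T\gg |J_0|\tilde q\prod_{j\ge1}|J_j|\varphi(\tilde q).
\]
Now fix a pair $i\ne j$ with $j\ge1$, and fix all coordinates except $\tilde r_i$; in particular $a_j$ is fixed. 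The map $x\mapsto a_jx$ is a bijection on residues mod $\tilde q$, and an interval $J_i\tilde q$ covers each residue at most $|J_i|+1$ times. The bad values of $a_j\tilde r_i$ lie in an interval of length $2c\tilde q$ around $\theta_i\tilde q$ mod $\tilde q$. There are two cases:
\begin{itemize}
\item If $i=0$, the number of bad $\tilde r_0$ is at most $(|J_0|+1)(2c\tilde q+1)$.
\item If $i\ge1$, then $\tilde r_i$ is a unit, so $a_j\tilde r_i$ is a unit, and the number of bad $\tilde r_i$ is at most $(|J_i|+1)\bigl(2c\varphi(\tilde q)+O(\tilde q^{o(1)})\bigr)$.
\end{itemize}
Multiplying by the number of choices of the other coordinates, the bad tuples for the pair $(i,j)$ make up at most a proportion
\[
\ll \frac{(|J_i|+1)\,c}{|J_i|}+o(1)
\]
of $\mathcal T$. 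Summing over the fewer than $k^2$ pairs and taking $c$ small enough in terms of $k$ and $\min_i|J_i|$ leaves a good tuple once $\tilde q$ is large.

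\textbf{Main obstacle.} The key point is to keep $c$ uniform even though $\varphi(\tilde q)/\tilde q$ can tend to $0$. A naive count of bad $\tilde r_i$ by integers, giving $\asymp c\tilde q$, would swamp the $\asymp\varphi(\tilde q)$ admissible units. The remedy is the observation above that multiplication by the unit $a_j$ maps units to units, so the bad set has to be counted only among units, where it is again a proportion $\asymp c$. This requires that the units be equidistributed in intervals of length $\gg\tilde q$ with error $\tilde q^{o(1)}$, which holds; the role played by the \cite{sh} bound in Lemma~\ref{lemur0} is here taken over by this elementary estimate. The hypothesis $(R_1\cdots R_{k-1},\tilde q)=1$ is used only to make each $a_j$ a unit.
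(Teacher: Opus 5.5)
Your Step~1 is the paper's reduction, and Step~2 is a correct count as far as it goes. In particular, observing that multiplication by the unit $a_j$ permutes the units, so bad $\tilde r_i$ are counted among units, is the right way to handle small $\varphi(\tilde q)/\tilde q$. The problem is what the count delivers: your bad proportion for the pair $(i,j)$ is $\ll c\,(1+|J_i|^{-1})$, so your $c$ depends on $\min_i|J_i|$. The lemma is meant with $c$ independent of the intervals. Only the threshold for $\tilde q$ is flagged as depending on the $J_j$, just as in Lemma~\ref{lemur0}, where $c<1/2$ is arbitrary. This uniformity is essential in the application. The lemma is invoked at every step of the nested construction, with intervals shrinking to points, and $c$ enters the implied constant in (the analogue of) \eqref{eq:obenauf}, which must not decay from step to step. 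Read completely literally, with $c$ allowed to depend on $J_0,\dots,J_{k-1}$, your argument is a complete and more elementary proof; for the intended statement it has a gap.

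The loss comes from freezing the multiplier. With $\tilde r_j$, hence $a_j$, fixed, you only use that $x\mapsto a_jx$ is a bijection, and this carries no equidistribution information. Suppose $|J_i|<2c$ and $J_i$ lies within distance $c$ of $\theta_i$ modulo $1$. Then for $a_j\equiv 1$ (i.e.\ $\tilde r_j\equiv R_j$) every admissible $\tilde r_i$ is bad, so no bound uniform over the frozen multiplier can be independent of $|J_i|$.

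The missing ingredient is averaging over the multiplier: you need equidistribution of $R_j\tilde r_j^{-1}\tilde r_i \bmod \tilde q$ as $\tilde r_j$ runs over the units of the short interval $J_j\tilde q$. The paper fixes $\tilde r_i$ instead and takes this from \cite[Theorem~13]{sh}, a Kloosterman-type count of points of $xy\equiv R_j\tilde r_i \pmod{\tilde q}$ in a box. This gives, for \emph{every} $\tilde r_i$, a good proportion $1-2c-o(1)$ of $\tilde r_j$. The union bound over ordered pairs then works for any $c<1/(2k(k-1))$, independent of the $J_i$ and of $c_1$.

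You could keep your Fubini/union-bound skeleton and repair it by counting the bad pairs $(\tilde r_i,\tilde r_j)$ jointly. For instance, count the solutions of $R_j\tilde r_i\equiv g\tilde r_j \pmod{\tilde q}$ with $g$ in the bad arc via additive characters. This also gives a bad proportion $2c+o(1)$ uniformly in the $J_i$.
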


     \begin{proof}
        The first three conditions \eqref{eq:c10}-\eqref{eq:c30} are handled exactly same way as in Lemma~\ref{lemur0}. For last condition \eqref{eq:c40},
          since again $s_j\equiv R_j \tilde{r}_j^{-1}\bmod \tilde{q}$
          for $1\le j\le k-1$,
        we have to show
        \begin{equation} \label{eq:subj}
        R_j\tilde{r}_j^{-1}\cdot \tilde{r}_i\bmod \tilde{q}, \qquad 1\le j\le k-1, \; 0\le i\le k-1, \quad i\ne j
        \end{equation}
        can all avoid certain small intervals $G_{i,j}$ in $[0,\tilde{q}]\cap \mathbb{Z}$ of size $(1-2c)\tilde{q}$
        centered around $\lfloor\theta_i \tilde{q}\rfloor\bmod \tilde{q}$, under
        condition \eqref{eq:c20}.

       Now for each ordered pair $(i,j)$, $i\ne j$, 
       subject to the condition \eqref{eq:subj},
       again by~\cite[Theorem~13]{sh} we get essentially $(1-2c) \cdot \sharp (J_j \tilde{q}\cap \mathbb{Z})\cdot (\varphi(\tilde{q})/\tilde{q})$ many $\tilde{r}_j\in J_j \tilde{q} \cap \mathbb{Z}$ so that
       \eqref{eq:subj} holds for given $\tilde{r}_i$, in other words it holds for the $\kappa:= 1-2c>0$ ratio of relevant primitive integers in the interval $J_j$.
       Hence we conclude by the following combinatorial principle

       \begin{proposition}
           Let $A_1,\ldots,A_k$ be finite sufficiently large sets and 
           \[
           \kappa > 1- \frac{1}{2\binom{k}{2}}.
           \]
           Suppose for each ordered pair $(i,j)$ with $i\ne j$ we are given a set $\Omega_{i,j}\subseteq A_i\times A_j$ so that
           for every $a_i\in A_i$ the projection $E_{i,a_i}(j):=\{ a_j\in A_j: (a_i,a_j) \in \Omega_{i,j}\}\subseteq A_j$ has
           cardinality $\sharp E_{i,a_i}(j)\ge \kappa \sharp A_j$. 
           Then there exists
           $(a_1,\ldots,a_k)\in A_1\times A_2\cdots\times A_k$
           such that $(a_i,a_j)\in \Omega_{i,j}$ for each ordered pair $(i,j)$ with $i\ne j$.
       \end{proposition}

       \begin{proof}
           Fix an ordered pair $(i,j)$, $i\ne j$. There are $<\sharp A_i \cdot ((1-\kappa) \sharp A_j)$ many
           pairs $(a_i,a_j)\in (A_i \times A_j)\setminus \Omega_{i,j}$. Thus for
           at most $(1-\kappa)\prod_{g=1}^{k} \sharp A_g$ many tuples
           $(a_1,\ldots,a_k)\in A_1\times \cdots\times A_k$ this is the case.
           Thus if $2\binom{k}{2} \cdot (1-\kappa) < 1$ then there must remain
           some tuple for which $(a_i,a_j)\in \Omega_{i,j}$ holds for all ordered pairs $(i,j)$, $i\ne j$.
       \end{proof}

       The lemma is proved.
       \end{proof}
    We further need
    the following extension of Proposition 1, proved analogously.

  \begin{proposition} \label{pro00}
    Let $c,d$ positive real numbers with $cd<1$.
        Let $\xi$ be a real number and assume for some integers $p$ and $q>0$ we have
        \[
        |q\xi-p| < cq^{-k}.
        \]
        Then for any integers $u,v$ so that $u/v\ne p/q$ and 
        $0<v<Q=dq^{k}$, we have
        \[
        |v\xi-u| \ge (1-cd)d^{1/k} Q^{-1/k}.
        \]
    \end{proposition}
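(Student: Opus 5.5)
The plan is to copy the determinant argument behind Proposition~\ref{pro}, replacing the exponent $2$ by $k$. We may assume $v>0$. Since $u/v\neq p/q$, the integer matrix
\[
M=\begin{pmatrix} p & q\\ u & v\end{pmatrix}
\]
has $\det M = pv-qu\neq 0$, hence $|\det M|\ge 1$. This integrality gap is the only arithmetic input.

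Next we bound $\det M$ from above using the two small linear forms. Subtracting $\xi$ times the second column from the first column does not change the determinant, so
\[
\det M=\det\begin{pmatrix} p-q\xi & q\\ u-v\xi & v\end{pmatrix}=v(p-q\xi)-q(u-v\xi).
\]
The triangle inequality, the hypothesis $|q\xi-p|<cq^{-k}$ and $0<v<Q=dq^k$ then give
\[
1\le|\det M|\le v|q\xi-p|+q|v\xi-u|< dq^k\cdot cq^{-k}+q|v\xi-u| = cd+q|v\xi-u|.
\]

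Because $cd<1$, rearranging yields $|v\xi-u|>(1-cd)q^{-1}$. It remains to express $q^{-1}$ through $Q$. From $Q=dq^k$ we get $q=(Q/d)^{1/k}$, so $q^{-1}=d^{1/k}Q^{-1/k}$. This gives
\[
|v\xi-u|\ge(1-cd)d^{1/k}Q^{-1/k},
\]
which is the claim.

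No step here is a genuine obstacle. The one point to check is that the hypothesis $u/v\ne p/q$ is what makes the determinant nonzero, and that it is used in exactly the orientation matching the statement's role-swap of $u,v$ relative to $p,q$. Apart from that, the argument is the $k=2$ proof of Proposition~\ref{pro} with $q^{-2}$ replaced by $q^{-k}$.
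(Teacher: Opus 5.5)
Your proof is correct and follows the paper's intended route. The paper proves this ``analogously'' to Proposition~\ref{pro} by the same determinant argument: $|\det M|\ge 1$ from $u/v\ne p/q$, then column reduction gives $|\det M|<cd+q|v\xi-u|$, and finally $(1-cd)q^{-1}=(1-cd)d^{1/k}Q^{-1/k}$.
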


   Now the proof of Theorem~\ref{lemur00} works as follows: Start with $I_0$ and $J_{0,1},\ldots, J_{0,k-1}$
    arbitrary compact intervals. Assume the according 
    conditions \eqref{eq:2null}-\eqref{eq:EQA} from \S~\ref{casec} are again
    satisfied with respect to $\xi\in I_0$ and all $\zeta_j\in J_{0,j}$
    and $r_0, s_0, \tilde{r}_0$ replaced by $r_{0,j}, s_{0,j}, \tilde{r}_{0,j}$ (however with fixed $\tilde{q}_0$) for $1\le j\le k-1$, where
    in \eqref{eq:11null} we alter the exponent to $k+1$.
    Let accordingly 
    \[
    Q=Q_0=d\tilde{q}_0^{k}.
    \]
    Very similarly as in the proof of 
    we can handle the case where $(u,v_1,\ldots,v_{k})\in\mathbb{Z}^{k+1}$ is not 
    of the forms essentially as in \eqref{eq:1010}, which are now 
    a set of $k-1$ conditions, or \eqref{eq:2020}. Indeed,
    proceeding as in \S~\ref{casec} via Proposition~\ref{pro00} yields
    a lower bound
    \[
    Q|u\xi-v_1|\cdot \prod_{j=1}^{k-1} |u\zeta_j-v_{j+1}-\theta_j|\ge [(1-d\beta) d^{1/k}]^{k-1} \sigma \ge (1-d\beta)^{k} d/2>0.
    \]
    We are left with the
    cases  \eqref{eq:1010} and \eqref{eq:2020}.
    In the latter Case 1 of \eqref{eq:2020}, again
     \[
    \vert u\xi - v_1|\ge t |\tilde{q}_0 \xi - \tilde{p}_0|\ge |\tilde{q}_0 \xi - \tilde{p}_0| \ge \alpha d Q^{-1},
    \]
    and
     \[
    \vert u\zeta_j - v_{j+1}-\theta_j|\ge 
    \Vert\theta_j\Vert - \Vert u \zeta_j\Vert
    \ge (1-o(1)) \Vert\theta_j\Vert \gg_{\theta_j} 1, \qquad 1\le j\le k-1.
    \]
    Thus the product satisfies 
    \[
    Q|u\xi-v_1|\cdot \prod_{j=1}^{k-1} |u\zeta_j-v_{j+1}-\theta_j|
    \ge  (1-o(1)) \cdot \alpha d \prod_{j=1}^{k-1} \Vert\theta_j\Vert \gg_{\theta_1,\ldots,\theta_{k-1}} 1
    \]
    indeed.
    
    In former Case 2, we again can estimate
    \[
     |u\zeta_j-v_{j+1}-\theta_j| 
     =|s_{0,j} \zeta_j-r_{0,j}-\theta_j| + O(Q^{-(N-1)/2})
     > (c_1\alpha-o(1))d\cdot Q^{-1}, \quad 1\le j\le k-1,
     \]
     by the new analogous version of \eqref{eq:Gg}. 

     We take $\alpha, \beta, c_1, c_2:= c_1+1$ such that
     \[
     \alpha c_2 < \gamma_1 < \gamma_2 < \beta c_1,
     \]
     which is again possible by taking $\alpha$ small enough and $c_1$ large enough.
     Again proceeding as in~\S~\ref{casec} we require condition \eqref{eq:EQA} as before
     for $\xi$, and for $\zeta_i, i\ne j$ since again $\Vert t\tilde{q}_0 \zeta_j\Vert\ll \tilde{q}_0^{-1}$ is small,
     additionally conditions
     \[
     \left\Vert \frac{ s_{0,j} \tilde{r}_{0,i} }{ \tilde{q}_0 } - \theta_i \right\Vert \gg 1, \qquad 1\le i,j\le k-1, \; i\ne j.
     \]
     Here we have used assumption \eqref{eq:Diese}, note also the change of exponents to $k+1$ in \eqref{eq:11null}.
     So if we let $\theta_0=0$ and $\tilde{p}_0=\tilde{r}_{0,0}$ we can unify this as
     \[
     \left\Vert \frac{ s_{0,j} \tilde{r}_{0,i} }{ \tilde{q}_0 } - \theta_i \right\Vert \gg 1, \qquad 1\le j\le k-1, \quad 0\le i\le k-1, \quad i\ne j.
     \]
     This condition replaces \eqref{eq:c4} in Lemma~\ref{lemur0} which 
     accordingly with $I=:J_0$ becomes \eqref{eq:c40} in Lemma~\ref{lemur00}.
     Existence of the desired real vectors follows now again analogously to \S~\ref{casec}, the density and $G_{\delta}$ properties follow analogously as well.

\end{document}